\numberwithin{equation}{subsection}
\newtheorem{theorem}{Theorem}[section]
\newtheorem{lemma}[theorem]{Lemma}
\newtheorem{proposition}[theorem]{Proposition}
\newtheorem{corollary}[theorem]{Corollary}
\theoremstyle{definition}
\newtheorem{definition}[theorem]{Definition}
\newtheorem{conditions}[theorem]{Conditions}
\theoremstyle{remark}
\newtheorem{remark}[theorem]{Remark}
\newtheorem{rmk}[theorem]{Remark}
\newcommand{\C}{\mathbb{C}}
\DeclareMathOperator{\Loc}{Loc}
\DeclareMathOperator{\GL}{GL}
\DeclareMathOperator{\End}{\mathsf{End}}
\newcommand{\ra}{\rightarrow}
\newcommand{\A}{\mathbb{A}}
\newcommand{\D}{\mathbb{D}}
\newcommand{\E}{\mathbb{E}}
\newcommand{\F}{\mathbb{F}}
\newcommand{\M}{\mathbb{M}}
\newcommand{\N}{\mathbb{N}}
\newcommand{\Q}{\mathbb{Q}}
\newcommand{\ol}{\overline}
\newcommand{\ul}{\underline}
\newcommand{\Qbar}{\overline{\Q}}
\newcommand{\V}{\mathbb{V}}
\newcommand{\Z}{\mathbb{Z}}
\newcommand{\cA}{\mathcal{A}}
\newcommand{\cC}{\mathcal{C}}
\newcommand{\cD}{\mathcal{D}}
\newcommand{\cE}{\mathcal{E}}
\newcommand{\cF}{\mathcal{F}}
\newcommand{\cL}{\mathcal{L}}
\newcommand{\cO}{\mathcal{O}}
\newcommand{\cS}{\mathscr{S}}
\newcommand{\cU}{\mathcal{U}}
\newcommand{\cV}{\mathbb{V}}
\newcommand{\cX}{\mathcal{X}}
\newcommand{\fp}{\mathfrak{p}}
\newcommand{\ShimK}{S_K(G,X)}
\newcommand{\integralShimK}{\mathscr{S}_K(G,X)}
\renewcommand{\cS}{\mathscr{S}}
\newcommand{\Gm}{\mathbb{G}_m}
\newcommand{\Fpbar}{\ol{k}}
\newcommand{\dR}{_{\mathrm{dR}}}
\newcommand{\et}{{\mathrm{et}}}
\newcommand{\cris}{_{\mathrm{cris}}}
\newcommand{\FL}{_{\mathrm{FL}}}
\newcommand{\Fil}{{\mathrm{Fil}}}
\newcommand{\ord}{{\mathrm{ord}}}
\DeclareMathOperator{\tor}{tor}
\newcommand{\frob}{\varphi}
\DeclareMathOperator{\dr}{dR}
\DeclareMathOperator{\GSp}{GSp}
\DeclareMathOperator{\Sp}{Sp}
\DeclareMathOperator{\gr}{Gr}
\newcommand{\sIsom}{\mathcal{I}\hspace{-.09em}som\,}
\DeclareMathOperator{\Rep}{Rep}
\DeclareMathOperator{\Gal}{Gal}
\DeclareMathOperator{\Aut}{Aut}
\DeclareMathOperator{\Lie}{Lie}
\DeclareMathOperator{\der}{der}
\DeclareMathOperator{\Gr}{Gr}
\DeclareMathOperator{\bb}{BB}
\def\scrA{\mathscr{A}}
\DeclareMathOperator{\id}{id}
\title{Finiteness of function field-valued points on exceptional Shimura varieties}
\author{Benjamin Bakker, Ananth N. Shankar, Jacob Tsimerman}
\date{\today}
\begin{document}
\maketitle
\begin{abstract}
Let $C/k$ be a smooth curve over a finite field of characteristic $p>0$. We prove that there are finitely many principally polarized abelian schemes of given dimension $g$ over $C$ up to $p$-power isogeny. For curves over $\ol{k}$, we prove that the moduli space of such abelian schemes is finite type up to $p$-power isogeny. 

Moreover, we generalize this result to arbitrary (not necessarily abelian type) Shimura varieties $S$ and sufficiently large primes $p$ in terms of $S$: the space of generically ordinary  morphisms $C\to S_{k}$ (resp. $C\to S_{\ol{k}})$ is finite (resp. finite type) up to $p$-Hecke orbits.
\end{abstract}
\section{Introduction}

\subsection{Abelian varieties}

The starting point for our work is the classical Shafarevich conjecture for abelian
varieties, proven by Faltings \cite{faltings1983}: \textit{given a number field $K$ and a finite set of places $S$, there are only finitely many isomorphism classes of
abelian varieties of dimension $g$ over K with good reduction outside $S$. }

For function fields of curves in characteristic $p$, the naive analogue cannot hold because of the Frobenius morphism. Let $C/k$ be a smooth curve over a finite field, and $\cA/C$ a principally polarized abelian scheme. One may the pull back under the relative Frobenius map $F_{C/k}$ to obtain additional such schemes $(F_{C/k}^n)^*\cA/C$. These are all distinct so long as $\cA$ is not isotrivial, as their Hodge degree grows to infinity.

Moreover, simply accounting for Frobenius is insufficient, as can be seen, for instance, by looking at products $\cA_1\times (F_{C/k}^n)^*\cA_2$. We prove the following analogue:

\begin{theorem}\label{thm:mainavfinitefield}
    Let $k$ be a finite field of characteristic $p>0$, and $C/k$ a smooth irreducible curve. 
    For $g>0$, there are only finitely many abelian schemes $\cA\ra C$ which are principally polarized of (relative) dimension $g$, up to $p$-power polarized isogeny. The same statement holds for unpolarized $g$-dimensional abelian schemes\footnote{Recall that an abelian scheme $\cA/ C$ is always polarizable.} up to $p$-power isogeny.
\end{theorem}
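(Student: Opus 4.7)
My plan is to reduce the statement to two finiteness inputs: finiteness of isogeny classes, and within each isogeny class, finiteness of $p$-power isogeny classes.

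For the first input I would control the prime-to-$p$ Tate modules. For $\ell\ne p$, the sheaf $R^1\pi_*\Q_\ell$ associated to $\pi\colon\cA\to C$ is a lisse $\Q_\ell$-local system on $C$ of rank $2g$, pure of weight $1$, with determinant a Tate twist of the cyclotomic character. By Deligne's finiteness conjecture---established as a consequence of Lafforgue's proof of the Langlands correspondence over function fields---there are only finitely many lisse $\Q_\ell$-sheaves on $C$ of bounded rank, determinant, and ramification at infinity (the latter being automatic here from semistable reduction theory for abelian varieties). Zarhin's version of the Tate conjecture for abelian varieties over finitely generated fields of positive characteristic then shows that the isogeny class of $\cA$ is determined by any single $\ell$-adic Tate module as a $\pi_1(C)$-module, so this bounds the number of isogeny classes by a finite quantity depending only on $g$ and $C$.

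For the second step, fix a representative $\cA_0$ of an isogeny class, with rational endomorphism algebra $D=\End^0(\cA_0)$. Elements of the isogeny class correspond bijectively to $\pi_1(C)$-stable lattices in the rational Tate module $V^f(\cA_0)=\prod_\ell V_\ell(\cA_0)$ modulo $D^\times$. Up to $p$-power isogeny we fix the $p$-part and vary only the prime-to-$p$ part, giving a double coset space of the shape $G(k(C))\bs G(\A_f^{(p)})/K^{(p)}$, where $G$ is the group of symplectic similitudes of $V^f(\cA_0)$ commuting with $D$ and $K^{(p)}$ is the compact open cut out by the $\pi_1(C)$-action. Finiteness of this double coset space follows from reduction theory for reductive groups over global function fields. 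For the unpolarized statement one first applies Zarhin's trick, replacing $\cA$ by $(\cA\times\cA^\vee)^4$ to obtain a principal polarization.

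The main obstacle will be the second step: ensuring that the double coset space genuinely parametrizes prime-to-$p$ isogeny classes of abelian schemes over all of $C$, rather than merely over its generic point, and uniformly tracking the polarization and endomorphism data across the isogeny class. A subsidiary issue is verifying that Deligne's finiteness applies in the precise form needed---that the ramification of $R^1\pi_*\Q_\ell$ at the boundary of a compactification $\bar C$ is bounded uniformly in $\cA$, which in turn reduces to semistable reduction theory. Given the paper's broader framework, I expect the authors' actual proof proceeds more geometrically via period maps on the Siegel Shimura variety $\cA_g$ combined with rigidity and Hecke-orbit considerations---consistent with their general Shimura-variety statement---but the classical $\ell$-adic route sketched above should in principle establish the abelian case on its own.
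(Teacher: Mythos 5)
Your step 1 (finitely many isogeny classes) is essentially the paper's first move: both you and the authors invoke Zarhin--Mori Tate semisimplicity together with a finiteness statement about the Frobenius/monodromy data (you use Deligne's finiteness for $\ell$-adic local systems; the paper works with $F$-isocrystals and companions, which is the same circle of ideas). The small worry you raise about uniform Swan conductor bounds is real but fine: the $\ell$-adic Tate module of an abelian scheme over $C$ is unramified on $C$ and has bounded conductor at the cusps, so Deligne finiteness applies.

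The genuine gap is in your step 2, and it is exactly the part you flag as the ``main obstacle.'' First, a technical issue: the group you call $G$ is built from $D=\End^0(\cA_0)$, which is a finite-dimensional $\Q$-algebra, so $G$ is an algebraic group over $\Q$, not over $k(C)$; ``reduction theory for reductive groups over global function fields'' is not the relevant input. More importantly, the set you want to bound is not a clean double coset $G(\Q)\backslash G(\A_f^{(p)})/K^{(p)}$: it is the set of $\pi_1(C)$-stable lattices in $V^{(p)}(\cA_0)$ modulo $D^\times$, and because the image of $\pi_1(C)$ is only a compact subgroup of the monodromy group rather than a congruence subgroup, this set has no a priori reason to be finite. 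Making it finite would require a uniform ``big image'' statement for the $\ell$-adic monodromy for all $\ell\neq p$ simultaneously (Cadoret--Hui--Tamagawa-type results), or a Jordan--Zassenhaus argument together with control of the local orders $\Z_\ell[\rho(\pi_1)]$ for almost all $\ell$ --- none of which you supply, and none of which is painless. This is precisely the point where the paper takes a completely different route: it observes that a prime-to-$p$ isogeny preserves $\cA[p^\infty]$, that the Hodge bundle on $\bar C$ is recovered from the mod-$p$ Dieudonn\'e module $\det\ker(F\colon\D_{\bar C}\to\D_{\bar C}^{(p)})$, and that this bundle is ample on the Baily--Borel compactification. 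So within a class with a fixed $p$-divisible group the Hodge degree is \emph{fixed}, and finiteness of maps $C\to\scrA_g$ of bounded Hodge degree replaces the lattice count entirely. This is the insight your proposal is missing, and it is exactly what makes the characteristic-$p$ argument go through where pure lattice-counting stalls.

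One subsidiary remark: your other worry --- that the double coset might parametrize prime-to-$p$ isogeny classes only over the generic point rather than over all of $C$ --- is not actually an issue, since good reduction is an isogeny invariant and an abelian variety over $k(C)$ isogenous to $\cA_{0,\eta}$ extends uniquely to an abelian scheme over $C$ by N\'eron. So the real obstruction is the finiteness of the lattice count, not the spreading-out.
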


If instead we work with the algebraic closure $\ol{k}$ the statement becomes false,  as can be seen by simply looking at constant abelian schemes. As an attempt at a suitable analogue in this setting, one may hope that, up to $p$-power polarized isogeny, all such abelian schemes occur in a finite type family. 

To formulate our result, recall that any abelian scheme gives a Hodge bundle $\omega_{\cA/C}$ on $\ol{C}$ which is pulled back from an ample bundle on the Baily-Borel compactification $\scrA^{BB}_g$. We refer to the degree of the pull-back of this bundle as the \textit{Hodge degree.}

\begin{theorem}\label{thm:mainavalgclosedfield}
Let $k$ be a finite field of characteristic $p>0$, $C$ a smooth irreducible curve over either $k$ or $\bar k$, and $g>0$. There exists an integer $M=M(g,C)$, such that any principally polarized abelian scheme over $C$ of dimension $g$ admits a $p$-power polarized isogeny to another such scheme of Hodge degree bounded by $M$.

Moreover, if we restrict to $\cA$ with maximal algebraic monodromy, then the set of $p$-power polarized isogeny classes is finite.
\end{theorem}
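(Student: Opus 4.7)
The plan is to deduce both parts of Theorem~\ref{thm:mainavalgclosedfield} from Theorem~\ref{thm:mainavfinitefield} via descent and a rigidity argument. For the Hodge degree bound, I would first reduce to the finite-field case by spreading out. Any $C/\bar k$ descends to some $C_{k_0}/k_0$ over a finite extension of $k$, and a principally polarized abelian scheme $\cA/C_{\bar k}$ descends further to some $\cA'/C_{k'}$ over a finite extension $k'/k_0$. Applying Theorem~\ref{thm:mainavfinitefield} to $C_{k'}/k'$ gives finitely many $p$-power polarized isogeny classes over $k'$, each containing a representative of Hodge degree at most some $M(k')$ depending on $k'$. The key step is to promote this to a bound $M(g, C)$ uniform in $k'$. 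For this, I would argue that the minimum value of $\deg \omega_{\cA/C}$ within a fixed $p$-power isogeny class over $\bar k$ is an intrinsic invariant that is universally bounded: given $\cA$ of very large Hodge degree, one constructs an explicit reducing isogeny (for instance, quotienting by a rank-$p^g$ subgroup scheme of $\cA[p]$ selected using the Hodge filtration on the crystalline Dieudonn\'e module), iterating until the Hodge degree falls below a threshold determined only by $\ol{C}$ and the generic isocrystal data.

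For the second part, the Hodge degree bound places all isogeny classes inside the finite-type Hom scheme $\Hom_{\leq M}(\ol{C}, \scrA^{BB}_g)$ of maps pulling back the ample Hodge class to a line bundle of degree at most $M$. Maximal algebraic monodromy is a locally closed condition that is invariant under $p$-power isogeny. By a rigidity principle for variations with monodromy $\GSp_{2g}$ (the positive-characteristic analogue of Deligne's rigidity for VHS with large monodromy, phrased in terms of $F$-isocrystals), such maps admit no continuous deformations in moduli, so the corresponding locus is $0$-dimensional in this finite-type Hom scheme and has only finitely many $\bar k$-points. Each such point, by the first part, contributes at most finitely many $p$-power isogeny classes, yielding the finiteness. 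Intuitively, maximal monodromy rules out the constant and isotrivial families, which are the source of the infinitude over $\bar k$ in general.

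The main obstacle I anticipate is the uniform Hodge degree bound in the first part. Theorem~\ref{thm:mainavfinitefield} immediately yields bounds $M(k')$ per finite descent field, but removing the dependence on $k'$ requires either a delicate crystalline analysis of $\cA[p^\infty]$ to exhibit a universally bounded minimum-Hodge-degree representative in each isogeny class, or a global argument using the $p$-power Hecke correspondences on $\scrA_g$ to show that these minima are bounded independently of the class. A secondary obstacle is formulating and applying the correct rigidity theorem in positive characteristic; in the abelian-type case of $\scrA_g$ this should follow from classical techniques, but if the goal is to later extend to exceptional Shimura varieties (as the title suggests), one would want a rigidity statement formulated intrinsically for maps to $S_K(G,X)_{\ol k}$.
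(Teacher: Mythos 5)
The uniform bound $M(g,C)$ over $\bar k$ is precisely the point where your proposal has a real gap, and you correctly identify it as the main obstacle, but your proposed fix does not close it. Reducing to Theorem~\ref{thm:mainavfinitefield} over a finite descent field $k'$ only yields a bound $M(k')$ per $k'$, and the number of $p$-power isogeny classes over $C_{k'}$ genuinely grows with $k'$ (already for constant families), so one cannot extract uniformity by descent alone. Your suggested ``reducing isogeny'' mechanism (quotienting by a rank-$p^g$ subgroup chosen via the Hodge filtration) is too vague to evaluate and, as stated, tends to move in the wrong direction: quotienting by $\ker F$ produces $\cA^{(p)}$ and multiplies the Hodge degree by $p$. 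The paper instead works directly over $C_{\bar k}$: it proves finiteness of the $p$-divisible groups $\cA[p^\infty]$ up to polarized isogeny on $C_{\bar k}$ via finiteness of arithmetic, tame, semisimple, $\Q_\ell$-nice $F$-isocrystals (Theorem~\ref{thm:Fisocrystalsfinitestuff}(3), resting on Litt's work) together with finiteness of $\GSp$-structures (part (4)); one then replaces $\cA[p^\infty]$ by a fixed representative $B$ in its isogeny class, and the crucial point is the Faltings--Chai identification of the Hodge bundle on $\overline{\scrA}_{g,n,\F_p}$ with $\det\ker\bigl(F:\D(\cU[p^\infty])\to\D(\cU[p^\infty])^{(p)}\bigr)$, which shows the Hodge degree is an invariant of the log-$F$-crystal attached to $B$ alone (using the uniqueness of the log extension from Lemma~\ref{lem:crystaluniquelyextendstobdrypoints}). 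This is the step your proposal lacks; without something like it the bound cannot be made independent of $k'$.

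For the maximal-monodromy part, your appeal to a ``rigidity principle for $F$-isocrystals with monodromy $\GSp_{2g}$'' giving a $0$-dimensional Hom scheme is not substantiated and is not what the paper does. Once the Hodge degree is bounded, the moduli of such maps is of finite type, so finiteness of isogeny classes reduces to showing that any connected family $\cA'/C\times X$ through $\cA$ stays in a single polarized $p$-isogeny class. The paper proves this by observing that the image of $\pi_1^{\et}(X)$ centralizes the image of $\pi_1^{\et}(C)$ and, by maximality, must be trivial; then descending $C,\cA,\cA'$ to a common finite field and invoking the Tate conjecture there yields an isogeny $\cA'\to\pi_C^*\cA$, which (again by maximal monodromy, hence no nontrivial endomorphisms) is automatically polarized, with $p$-power degree since the prime-to-$p$ data is rigid along $X$. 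Your ``locally closed, $0$-dimensional'' picture has the further issue that a positive-dimensional component of the Hom scheme could a priori still consist of a single isogeny class, so dimension counting is not the right invariant; the relevant statement is exactly that connected families are constant up to $p$-isogeny, which is what the centralizer/Tate argument provides.
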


\begin{rmk}
This may be seen as an analogue to Deligne's result \cite{deligne1987} implying that the space of maps from a complex curve to $\scrA_g$ is of finite type.
\end{rmk}

\begin{rmk}
Over $\ol{k}$ it is unclear to the authors whether the analogue of \Cref{thm:mainavalgclosedfield} is true.
\end{rmk}

\subsection{General Shimura varieties}

We now state our generalization to arbitrary Shimura varieties. To even make a well-defined statement we require the existence of good integral models, which exist for sufficiently large primes by \cite{BST}. The analogue of $p$-power isogeny is provided by Hecke operators at $p$, for which we simply take the Zariski closures of the characterstic zero operators to define their characteristic $p$ analogues (see \Cref{def:pheckeorbit}).

\begin{theorem}\label{thm:mainshimuravarieties}
Let $S$ be a Shimura variety with reflex field $E$, and $\cS/\cO_E[1/N]$ be the integral canonical model in the sense of \cite{BST} satisfying \Cref{cond:integralmodel}. Let $k$ be a finite field, and $C$ a smooth curve over either $k$ or $\ol{k}$. There is an integer $M=M(C,S)$ such that for any morphism $f:C\to S$ whose image intersects the ordinary locus, there exists a morphism $f_1:C\to S$ of Griffiths degree at most $M$ such that $f,f_1$ are in the same $p$-Hecke orbit (\Cref{def:pheckeorbit}). Hence, if $C$ is defined over $k$, the set of such $p$-Hecke orbits of morphisms is finite.

Finally, the set of $p$-Hecke orbits of such $f$ with maximal algebraic monodromy is finite. 
\end{theorem}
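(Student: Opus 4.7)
The plan is to first bound the Griffiths degree of any generically ordinary $f\colon C\to \cS$ within its $p$-Hecke orbit, and then deduce the two finiteness statements from this bound by combining finite-type arguments with Hodge-theoretic rigidity.

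The central step is the construction of a $p$-Hecke operator that strictly decreases the Griffiths degree on the ordinary locus. At an ordinary point of the good integral canonical model $\cS$ of \cite{BST}, the associated $G$-isocrystal admits a canonical slope decomposition, and the minuscule Hodge cocharacter $\mu$ of the Shimura datum picks out a distinguished $G(\Q_p)$-coset; this should yield a $p$-Hecke correspondence $T_\mu$ on $\cS^{\ord}$ that generalizes the canonical-subgroup / Serre-Tate construction underlying \Cref{thm:mainavalgclosedfield}. Using the axioms of the integral model, $T_\mu$ extends to $\cS$, and a direct computation of its effect on the Griffiths line bundle $\omega$ (ample on the Baily-Borel compactification $S^{\bb}$) should give an estimate of the form $\deg f_1^*\omega \leq p^{-1}\deg f^*\omega + c(C,S)$, where the additive error is controlled by the non-ordinary fibers of $f$. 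Iterating geometrically then produces a representative $f_1$ in the $p$-Hecke orbit of $f$ with $\deg f_1^*\omega \leq M(C,S)$.

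Once the Griffiths degree is bounded, ampleness of $\omega$ on $S^{\bb}$ implies that the space of morphisms $C\to \cS$ with Griffiths degree at most $M$ is of finite type. When $C$ is defined over a finite field $k$, this space has finitely many $k$-points, giving the finiteness of $p$-Hecke orbits. For the maximal monodromy claim over $\bar k$, I would argue that a positive-dimensional family of morphisms with maximal algebraic monodromy is impossible: non-trivial deformations would preserve the algebraic monodromy group, and period-map rigidity (in the spirit of the Ax--Schanuel style results used in the construction of the integral canonical model) forces such a family to factor through a proper sub-Shimura datum, contradicting maximality. Hence within the finite-type family of bounded-degree maps, only finitely many $p$-Hecke orbits have maximal algebraic monodromy.

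I expect the principal obstacle to be the construction of $T_\mu$ and the quantitative control of its effect on the Griffiths degree. For Shimura varieties of Hodge type, this construction is accessible via the canonical subgroup of the universal abelian scheme; for arbitrary, possibly exceptional, Shimura varieties one must work entirely at the level of the $G$-isocrystal, using only the axiomatic properties of the integral model of \cite{BST}. The secondary subtlety is extending $T_\mu$ across the non-ordinary fibers of $f$ and bookkeeping the additive error $c(C,S)$, which is what prevents a naive divisibility argument and ultimately determines the explicit constant $M(C,S)$.
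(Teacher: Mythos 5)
Your central mechanism — a $p$-Hecke operator $T_\mu$ on the ordinary locus satisfying $\deg f_1^*\omega \le p^{-1}\deg f^*\omega + c(C,S)$, iterated to contract the Griffiths degree — does not exist, and this is a fatal gap. The canonical Hecke operator over the ordinary locus (the paper's $\tau=\pi_{\mu(p)}\circ\ordsection$ in Proposition \ref{prop: canonical branch}) reduces mod $p$ to \emph{Frobenius}, which \emph{multiplies} the degree of any line bundle by $p$; there is no ``inverse canonical subgroup'' direction in characteristic $p$, because Frobenius is not invertible. Already in the abelian variety case, quotienting $\cA$ by its canonical subgroup $\ker F$ yields $\cA^{(p)}$ with $p$ times the Hodge degree, not $p^{-1}$ times; the degree-contracting $U_p$ intuition is a characteristic-zero/rigid-analytic phenomenon and does not transport to the reduction. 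Relatedly, your ``additive error controlled by the non-ordinary fibers'' has no mechanism behind it: the paper simply shrinks $C$ to an open $C^\circ$ where everything is ordinary (permitted by \Cref{lem:crystaluniquelyextendstobdrypoints}), rather than accounting for non-ordinary contributions quantitatively.

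The second, equally serious omission is that your sketch never invokes the finiteness of the underlying $F$-isocrystals (with $G$-structure), which is the actual source of uniformity in the proof. The paper's argument is in two stages, as announced in its Section 1.3: first, \Cref{thm:Fisocrystalsfinitestuff} — using Litt's theorem, companions, and Deligne's finiteness for $\ell$-adic local systems — shows that $f^*\cris V$ ranges over a finite set as $f$ varies; second, within a fixed isocrystal class one chooses a reference $F$-crystal $\D$ of bounded Griffiths degree and uses the integral Hecke cover of \Cref{lem:intstructurecover} and the lifting criterion \Cref{thm:integral lift} to produce $\psi$ in the $p$-Hecke orbit of $f$ with $\psi^*\cris\V'\cong F^{m*}\D$. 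The degree bound then comes not from a contraction estimate but from showing, via the immersiveness of Kodaira--Spencer in \Cref{cond:integralmodel}, that $\psi$ literally factors through the $m$-th power of Frobenius, so that $F^{-m*}\psi$ has the bounded Griffiths degree of $\D$. Neither this ``undo Frobenius'' step nor the isocrystal finiteness appears in your proposal. Finally, your appeal to Ax--Schanuel-style period-map rigidity for the maximal-monodromy case is not how the paper argues and is unavailable in characteristic $p$; the paper instead observes that $\pi_1^{\et}(X)$ commutes with the maximal monodromy of $\pi_1^{\et}(C)$, forcing the $\ell$-adic family to be constant, and then upgrades this to constancy of the $F$-crystal via ordinariness (\Cref{lem: ordinary crystals agreeing at a point}) and ampleness of the Griffiths bundle.
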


\subsection{Proof outline}

One of the main difficulties of deducing the statement from the Tate conjecture is dealing with infinitely many possible primes that may show up as the degree of an isogeny, each of which may potentially occur once. We observe that in the characteristic $p$ case, it is sufficient to control the relevant data at $p$ only (the $p$-divisible group in the abelian variety setting, and the $F$-crystal in general). This is simply because these data recover the Griffiths bundle which is ample, and so controlling its degree gives the finite typeness we require.

There are now two main steps to the proof:
\begin{enumerate}
    \item Proving finitely many $F$-isocrystals occur,
    \item Showing that one may move between $F$-crystals occurring in the same $F$-isocrystal using a $p$-Hecke operator.
\end{enumerate}

We accomplish (1) using a theorem of Litt (See \Cref{thm:Fisocrystalsfinitestuff}) and the theory of companions. For (2), in the context of abelian varieties it is straightforward since (polarized) isogenies of $p$-divisible groups yield (polarized) isogenies of the corresponding abelian varieties. In the context of Shimura varieties, this constitutes a significant challenge. Indeed, our Hecke operators are defined rather coarsely by taking a Zariski closure from characteristic 0, and they lack a motivic interpretation.

It is only in this part of the argument that we rely on the ordinary locus. Indeed, by the crystallinity results of Esnault-Groechenig \cites{EG,EG1} we obtain the crystallinity of $_{\et}\V_p$ on the generic fiber. As such, by the existence of slope filtrations, over the ordinary locus we are able to show that the (twisted) graded components of these local systems extend. Hence, we are able to preserve some part of the $G$-structure (the Levi part) even in characteristic $p$. We are then able to give interpretations for certain Hecke operators -  in terms of level structures corresponding to framing data on these graded bundles - even in characteristic $p$. This provides enough of a replacement for isogenies of abelian varieties for the proof to go through.

\subsection{Outline}
In \S2 we define $G$-structures on \'etale local systems, in a form convenient for us, and prove finiteness results for $F$-isocrystals using the work of Litt\cite{Litt2}. In \S3 we prove our result for abelian varieties. In \S4 we recall background around Shimura varieties, focusing in particular on level structures and Hecke operators in a way that will extend to mixed characteristic. In \S5 we recall background on $F$-crystals. In \S6 we study $F$-crystals over the ordinary locus, and use this to extend our Hecke morphisms to mixed chracteristic over (certain components of) the ordinary locus. Finally, in \S7 we prove our main result for Shimura varieties, following the outline of \S3 using the machinery developed in \S4-6.
\subsection{Acknowledgments}
We thank Anna Cadoret, Daniel Litt, Keerthi Madapusi and Si-Ying Li for useful conversations.

B. B. was partially supported by NSF grant DMS-2401383, the Institute for Advanced Study, and the Charles Simonyi Endowment. A.S. was partially supported by the NSF grant DMS-2338942, the Institute for advanced studies (via the NSF grant DMS-2424441), and a Sloan research fellowship. J.T. was partially supported by the Institute for Advanced Study, and the Charles Simonyi Endowment.
\section{Local Systems and $G$-structures}
\begin{definition}\label{def:G-structures}
Let $\Lambda$ be a topological group, and $\cF$ an pro-\'etale sheaf of sets/modules/vector spaces over a scheme $X/k$ over a field, in the sense of \cite[\S3]{scholze2013}. We define a $\Lambda$-structure on $\cF$ to consist of a pair $(S,\gamma)$, where $S$ is a $\Lambda$-set/module/vector space, and $\gamma\in\Gamma\big(X,\Lambda\backslash\sIsom(\cF,\ul{S})\big) $ where the $\Lambda$ action is trivial on $\cF$ and $\Lambda\backslash\sIsom(\cF,\ul{S})$ is the quotient in the category of sheaves (namely, the sheafification of the natural quotient presheaf). We also call this a $(\Lambda,S)$-structure if we wish to specify the $S$. We call $\gamma$ the \textit{trivializing section}.

A \textit{homomorphism} of sheaves with $\Lambda_1,\Lambda_2$-structures consists of compatible homomorphism of groups, sheaves, sets, and trivializing sections.

\end{definition}

Now if $G/\Q_p$ is an algebraic group, we define  a \textit{$\Q_p$-\'etale local system with $G$-structure}, as simply a morphism of Tannakian categories $\Rep_{\Q_p} G\to \Loc_{\Q_p}(X)$.

\begin{definition}
We say that a $\ol{\Q}_\ell$-local system on $C_k$ is \emph{algebraic} if the Frobenius eigenvalues at every closed point $x\in C(\F_q)$ are algebraic numbers in $\overline{\Q}$. We similarly define \emph{algebraic} $F$-isocrystals with $\ol{\Q}_p$ on $C_k$, with the semi-linear Frobenius at $x\in C(\F_q)$ being replaced by the eigenvalues of the $q$-Frobenius\footnote{Note that this is a linear endomorphism of the isocrystal restricted to $x$.} at $x$. 

Following \cite{Litt2}, we define an \textit{arithmetic} $F$-(iso)crystal on $C_{\bar k}$ to be an $F$-(iso)crystal which descends to $C_{k'}$ for some finite field $k'/k$. We likewise define arithmetic overconvergent $F$-(iso)crystals, and 
arithmetic overconvergent $F$-(iso)crystals with $L$ coefficients for $L\subset \ol{\Q}_p$. We note that every arithmetic semisimple $\ell$-adic local system (resp. overconvergent $F$-isocrystal) admits a descent to a mixed semisimple $\ell$-adic local system (resp. mixed semisimple $F$-isocrystal) over $C_{k'}$.

 \end{definition}

We will use the term companion for semisimple $F$-isocrystals and $\ell$-adic local systems in the sense of \cite[Definition 2.7]{kedlaya2022etale}, i.e. without requiring the objects to be irreducible or requiring them to have finite determinant. Further, note that by \cite[Corollary 3.15]{kedlaya2022etale}, given companion objects, the $F$-isocrystal is tame if and only if the $\ell$-adic local system is tame.  


 \begin{definition}

We call a semisimple $F$-isocrystal on $C_k$ with $\ol{\Q}_p$-coefficients 
\textit{$\Q_\ell$-nice} if its companion admits a descent to a semisimple $\Q_\ell$-local system. We call an arithmetic semisimple $F$-isocrystal on $C_{\ol{k}}$ with $\ol{\Q_p}$-coefficients \textit{$\Q_\ell$-nice} if it descends to a \textit{$\Q_\ell$-nice} semisimple $F$-isocrystal over $C_{k'}$.
Finally, as for local systems, if $G/\Q_p$ is an algebraic group, we define an \textit{overconvergent $F$-isocrystal with $G$-structure}, as simply a morphism of Tannakian categories $\Rep_{\Q_p} G\to F\mbox{-}\mathrm{Isoc}^{\dagger}(X)$.
 

\end{definition}

\begin{theorem}\label{thm:Fisocrystalsfinitestuff}
Let $C/k$ be a curve over a finite field with $q$ elements. Let $S$ be a finite set of rational numbers. 
\begin{enumerate}
   \item Any $F$-isocrystal with $\ol{\Q}_p$-coefficients on $C_{\ol{k}}$ (or $C_k$) admits finitely many descents to $\Q_p$ up to isomorphism. The same statement holds for $\ell$-adic local systems.
    
    \item  Fix a rank $r$, a point $x\in C(k')$, $k'/k$ a finite extension, and a finite set $T\subset \Qbar$. Up to isomorphism, there are only finitely many semisimple $F$-isocrystals of rank $r$ such that $\frob_x$ has eigenvalues lying in $T$.

    \item Fix a rank $r$ and a point $x\in C(\bar k)$. There are only finitely semisimple, tame, overconvergent, arithmetic, $\Q_\ell$-nice $F$-isocrystals of rank $r$ on $C_{\ol{k}}$ whose slopes at $x$ are contained within $S$. 

    \item For a reductive algebraic group $G\subset \GL_n$, any semisimple $F$-isocrystal over $C_{\bar{k}}$ (or $C_k$) admits finitely many $G$-structures up to isomorphism.

\end{enumerate}
    
\end{theorem}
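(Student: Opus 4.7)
The plan is to treat the four parts in sequence, each reducing the finiteness question to an established result: Galois cohomology of reductive groups for (1), the companions correspondence combined with Deligne--Drinfeld--Lafforgue / Esnault--Kerz finiteness of $\ell$-adic local systems for (2) and (3), and a Tannakian reduction to conjugacy classes of maps of reductive groups for (4).

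For (1), I would first observe that any $\overline{\Q}_p$-coefficient $F$-isocrystal $E$ (or $\ell$-adic local system) is already defined over some finite extension $L/\Q_p$, and that the set of $\Q_p$-forms of $E_L$ is classified by the non-abelian $H^1\bigl(\Gal(L/\Q_p),\Aut(E_L)\bigr)$. For semisimple $E_L$ the automorphism group is reductive over $L$, and this cohomology set is finite because it embeds into the finite set $H^1(\Q_p,\Aut(E_L))$ (finiteness of Galois cohomology of $p$-adic fields with values in reductive groups is due to Borel--Serre). The non-semisimple case reduces to the semisimple one by a d\'evissage along the Jordan--H\"older filtration.

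For (2) and (3), the unifying strategy is to pass to the $\ell$-adic side via companions, where the needed finiteness of local systems is available. In (2), the constraint that Frobenius eigenvalues at $x$ lie in the finite set $T$ forces the characteristic polynomial of $\frob_x$ into a finite set; an $\ell$-adic companion (Abe--Kedlaya) then has its Frobenius at $x$ constrained to the matching set, and the finiteness of semisimple $\ell$-adic local systems on $C_k$ of fixed rank and fixed Frobenius conjugacy class at one point (Drinfeld, Lafforgue) bounds the number of companions, and hence, via part~(1), the number of $F$-isocrystals. In (3), the $\Q_\ell$-nice hypothesis produces a genuine $\Q_\ell$-companion $\cL$; tameness of the $F$-isocrystal transfers to tameness of $\cL$, arithmeticity to descent over a finite field, and the slope bound at $x$ combined with arithmeticity and the Weil conjectures controls both the $p$-adic and the complex absolute values of the Frobenius eigenvalues. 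The Esnault--Kerz finiteness theorem for tame $\ell$-adic local systems of bounded rank and ramification then bounds the number of $\cL$'s, and we conclude as before. The main technical difficulty, I expect, is aligning the slope data (a $p$-adic invariant of the $F$-isocrystal) with a ramification bound on the $\ell$-adic side.

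For (4), I would interpret a $G$-structure on a semisimple $F$-isocrystal $E$ of rank $n$ Tannakianly. Let $H\subset\GL_n$ denote the Tannakian monodromy group of the subcategory generated by $E$; since $E$ is semisimple, $H$ is reductive. A $G$-structure on $E$ is then equivalent to a lift of the inclusion $H\hookrightarrow\GL_n$ to a homomorphism $H\to G$, considered up to $G(\Q_p)$-conjugacy. The variety of such lifts is of finite type, and $G$ acts on it with finitely many orbits by standard facts about homomorphisms between reductive groups modulo conjugation (Richardson). Parts~(1) and~(4) should be essentially formal, and~(2) reduces cleanly once companions are invoked; the heart of the work, and the main obstacle, will be the careful bookkeeping in part~(3).
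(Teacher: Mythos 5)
Parts (1), (2), and (4) of your proposal follow essentially the same route as the paper. For (1) you slightly overcomplicate matters: the paper observes that $\Aut(E)$ is a linear algebraic group over $\Q_p$ and invokes the finiteness of $H^1(\Gal_{\Q_p},-)$ for \emph{any} linear algebraic group over a $p$-adic local field (Serre), so the restriction to semisimple objects and the d\'evissage along a Jordan--H\"older filtration are unnecessary. For (2), your Drinfeld--Lafforgue phrasing is the same content as the paper's appeal to Deligne's theorem (finiteness of irreducible $\ol{\Q}_\ell$-local systems up to twist by a character of $\Gal(\ol k/k)$), with the eigenvalue constraint at $x$ then pinning the twist down to finitely many choices. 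For (4), the paper first proves the statement over $\ol{\Q}_p$ via Whitehead's lemma (which is the Lie-algebraic rigidity underlying the Richardson argument you cite) and then descends to $\Q_p$ by the same Galois-cohomological mechanism as in (1); your proposal compresses both steps into one but is the same in substance.

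Part (3) has a genuine gap. First, the finiteness input you need is for semisimple, tame, \emph{arithmetic} $\ol{\Q}_\ell$-local systems on the geometric curve $C_{\bar k}$ — that is Litt's theorem, cited in the paper; the Esnault--Kerz/Deligne finiteness you invoke lives over the arithmetic curve $C_k$ and does not directly apply. Second, and more seriously, you have misidentified where the difficulty sits. The problem is not ``aligning slope data with a ramification bound''; it is that the companion correspondence over $C_{\bar k}$ is not injective from $F$-isocrystals to $\ell$-adic local systems. Having bounded the set of possible companions $L$ on $C_{\bar k}$, one still faces an a priori infinite family of arithmetic $F$-isocrystals with the same companion: they differ from one another by twisting by a rank-one constant $F$-isocrystal pulled back from a point, and this twist is not seen by (1) (which only controls $\ol{\Q}_p$-to-$\Q_p$ descent). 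The paper resolves this by the decomposition of each candidate as $\bigoplus_i \cL\otimes\chi_i$ with $\chi_i$ rank-one constant, and by the observation that such rank-one $F$-isocrystals are \emph{completely determined by their slope}; the hypothesis that the slopes at $x$ lie in the finite set $S$ therefore confines the $\chi_i$ to a finite set. Your proposed use of ``arithmeticity and the Weil conjectures'' to control archimedean absolute values does not substitute for this step: slopes only control the $p$-adic valuations, and even if archimedean sizes were bounded, that would not by itself cut the twist ambiguity to a finite set without the slope rigidity of rank-one constant $F$-isocrystals. You should replace the last step of (3) with this twist-by-constant-$F$-isocrystal analysis.
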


\begin{proof}
\textbf{(1):} Let $E$ be such an $F$-isocrystal with $\ol{\Q}_p$-coefficients. Then the set of descents is given $H^1(\Gal_{{\Q}_p},\Aut(E))$. Note that $\Aut(E)$ is a linear algebraic group, and so this cohomology group is finite since first Galois cohomology groups valued in a linear algebraic group are finite \cite{SerreGalois}. An identical argument goes through in the $\ell$-adic case.

    \medskip

    \textbf{(2): } We may and do restrict to irreducible such isocrystals. Any semisimple, tame, overconvergent, mixed, $\Q_\ell$-nice $F$-isocrystal on $C_k$ is the companion of a semisimple, tame, mixed $\Q_\ell$-local system on $C_k$. It suffices to prove finiteness after extending coefficients to $\ol{\Q}_\ell$ by the first part. By a theorem of Deligne (\cite[Theorem 1.1]{esnault2012}), there are only finitely many irreducible $\ol{\Q}_\ell$-local systems on $C_k$ up to twist by a character of $\Gal(\ol{k}/k)$. It suffices to prove that there are only finitely many characters $\chi$ such that $V\otimes \chi$ satisfies the conditions on Frobenius eigenvalues. As the Frobenius eigenvalues at $x$ are constrained to a finite set, the possibilities for $\chi$ are finite, which completes the proof of (2). 
    
    \medskip

    \textbf{(3): } We can and will assume that the $F$-isocrystal does not have a non-trivial $\Q_\ell$-nice sub $F$-isocrystal. Any semisimple, tame, overconvergent, arithmetic, $\Q_\ell$-nice $F$-isocrystals on $C_{\ol{k}}$ is the companion of a semisimple, tame, $\Q_\ell$ local system on $C_{k'}$ for some finite subfield $k'$. By \cite[Cor 1.1.5(3)]{Litt2}, there are finitely may such local systems on $C_{\bar k}$.
    
    Fix such a local system $L$ on $C_{\bar k}$.
    Since $L$ has an irreducible descent to $C_{k'}$ it follows that $L_{\ol{\Q}_\ell}\cong M^m$ where $M$ is an irreducible $\ol{\Q}_\ell$-local system. Now since $L$ admits an extension to $C_{k'}$ so must $M$, call it $M_0$. Then since $M$ is irreducible, it follows that all extensions of $M$ to any $C_{k''}$ are of the form $M_0(\chi)$, where $\chi$ is a character of $\Gal_{k''}$. Thus, we see that our $F$-isocrystals are of the form $\bigoplus_i \cL\otimes \chi_i$ where $\chi_i$ is a 1-dimensional $F$-isocrystal pulled back from a point. As such $F$-isocrystals are completely determined by their slope, the claim follows from the finiteness of $S$.

    \medskip

    \textbf{(4):} We first prove this for $F$-isocrystals with $\ol{\Q}_p$-coefficients. Indeed, for such an $E$, this amounts to embedding the monodromy group $H$ of $E$ into $G$ up to conjugacy. To prove finiteness of such embeddings,  is enough to look at maps of Lie algebras since $H$ is semisimple.  The deformation space of such embeddings is classified by $H^1(\Lie H,\Lie G)$ which vanishes by the first Whitehead lemma, which implies the claim.

    Now it is enough to show that there are finitely many such descents. Indeed, this works exactly as above where we interpret $\Aut(E)$ as automorphisms respecting the $G$-structure.

\end{proof}

\section{Abelian Variety Statements}

We first deduce the unpolarized statement from the polarized statement:

\begin{proof}[\Cref{thm:mainavfinitefield} Polarized $\Rightarrow$ \Cref{thm:mainavfinitefield} Unpolarized]
Let $C/k$ be a curve as in the theorem.
The Tate conjecture - proven by Zarhin and Mori \cite{zarhin1975,zarhin1976,moret1985} in this setting - says that there are finitely many isogeny classes of abelian schemes over $C$. Hence, up to $p$-power isogeny, there are finitely many prime-to-$p$ power isogeny classes. Now taking prime-to-$p$ isogenies preserves the $p$-divisible group scheme, we may assume $\cA/C$ is such that $\cA[p^{\infty}]$ is isomorphic to a fixed $A$. We show there are finitely many such $\cA/C$.

We shall use Zarhin's trick, so set $Z:=(\cA\oplus \cA^{\vee})^4$. Then $Z$ is principally polarized and hence comes from a map $f:C\to \scrA_g$. Moreover, the Hodge degree of $f$ is fixed since the Hodge bundle on $C$ is recoverable as $\det\ker(F:\D((A\oplus A^\vee)^4)_{\ol{C}}\to \D((A\oplus A^\vee)^4)_{\ol{C}}^{(p)})$ where $\D((A\oplus A^\vee)^4)_{\ol{C}}$ denotes the corresponding log-flat bundle gotten by evaluating the contravariant log Dieudonn\'e module on ${\ol{C}}$. Since the Hodge bundle is ample there are finitely many such $f$ and hence finitely many such $Z$. Finally, since $A$ is a direct summand of $Z$, the statement follows from \cite[Thm 18.7]{milne1986av}.

\end{proof}

The remaining parts of \Cref{thm:mainavalgclosedfield} and \Cref{thm:mainavfinitefield} are implied by the following statement:

\begin{theorem}\label{thm:mainavcharp}
    Let $k$ be  a finite field of characteristic not dividing $N$. Let  $C$ be a smooth irreducible curve over either $k$ or $\bar{k}$. 
    For $g>0$, there is an integer $M=M(C,g)$ such that any principally polarized abelian schemes $\cA\ra C$ of dimension $g$ is $p$-power polarized isogenous to another such scheme $\cA'$ with Hodge degree bounded by $M$. Consequently, if $C$ is defined over $k$, there are finitely many such schemes up to isomorphism.

    Moreover, if $C/\ol{k}$ and we restrict to $\cA/C$ with algebraic monodromy $\Sp_{2g}$, the set of such $p$-power polarized isogeny classes is finite.
\end{theorem}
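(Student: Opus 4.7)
The plan follows the two-step outline from the introduction. First I will associate to each principally polarized abelian scheme $\cA/C$ its contravariant Dieudonné $F$-isocrystal $E(\cA) := \D(\cA[p^\infty])[1/p]$, endowed with the $\GSp_{2g}$-structure coming from the principal polarization, and verify the hypotheses of \Cref{thm:Fisocrystalsfinitestuff}(3)-(4). These $F$-isocrystals are overconvergent (Berthelot), tame (an abelian scheme over $C$ has trivial wild inertia on its Tate module), arithmetic (after spreading to some $C_{k'}$), $\Q_\ell$-nice (the $\ell$-adic Tate module is a companion, via compatible systems), and semisimple (Zarhin--Moret-Bailly on the geometric generic fiber). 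Their Newton slopes lie in $[0,1]$ with denominator at most $2g$, hence in a fixed finite set. Parts (3) and (4) of \Cref{thm:Fisocrystalsfinitestuff} then together yield only finitely many possibilities for $E(\cA)$ as an $F$-isocrystal with $\GSp_{2g}$-structure.

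Second, I will show that ppav's $\cA_1,\cA_2/C$ with isomorphic such $F$-isocrystals are automatically $p$-power polarized isogenous, and that inside each such class there is a representative of Hodge degree bounded by a constant $M_E$ depending only on $E$. For the isogeny step, a polarization-compatible isomorphism of $F$-isocrystals gives a polarized quasi-isogeny of the associated (polarized) $p$-divisible groups by Dieudonné theory, and clearing denominators produces a polarized $p$-power isogeny of the abelian schemes themselves (by quotienting by the resulting finite flat subgroup scheme). For the Hodge-degree bound, note that $\omega_\cA = \ker\bigl(F : \D(\cA)\to \D(\cA)^{(\sigma)}\bigr)$ depends only on the $F$-crystal lattice structure inside $E$; the Hodge degree is a non-negative function on the space of $\GSp_{2g}$-compatible Dieudonné lattices in $E$, bounded below by Mazur-type inequalities in terms of the Newton polygon of $E$, and the infimum is attained on a suitable bounded substratum of the affine Grassmannian. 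Taking $M := \max_E M_E$ over the finitely many $E$ from the first step gives the desired uniform bound.

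Finally, the finiteness consequences fall out. If $C$ is defined over the finite field $k$, the bound on Hodge degree together with ampleness of the Hodge bundle on the Baily--Borel compactification $\scrA_g^{BB}$ realizes the bounded-degree ppav's as $k$-points of a finite-type scheme, giving only finitely many ppav's up to isomorphism. If instead $C/\bar k$ with algebraic monodromy $\Sp_{2g}$, the first step already produces finitely many $F$-isocrystal classes, so by the second step there are only finitely many $p$-power polarized isogeny classes.

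The main obstacle I expect is the Hodge-degree minimization in the second step: verifying that the infimum of $\deg \omega$ over $\GSp_{2g}$-compatible Dieudonné lattices in a fixed $F$-isocrystal $E$ on $C$ is actually attained, and effectively bounded in terms of $E$ and $C$. This is essentially a statement about the combinatorics of the $\GSp_{2g}$ affine Grassmannian combined with the Newton-over-Hodge constraint on each fiber, and is the abelian-variety analogue of the more elaborate ordinary-locus argument that the paper develops in \S6 for general Shimura varieties.
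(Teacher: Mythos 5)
Your first step (applying \Cref{thm:Fisocrystalsfinitestuff} to get finitely many $F$-isocrystals with $\GSp_{2g}$-structure) matches the paper, modulo a minor slip: the paper uses part (2) over $C_k$ and part (3) over $C_{\bar k}$, not (3) in both cases. The substantive problems are in your second step.

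You claim that two ppav's $\cA_1,\cA_2/C$ with isomorphic polarized $F$-isocrystals are automatically $p$-power polarized isogenous. This is false: the $F$-isocrystal only remembers the isogeny class of the $p$-divisible group, and two abelian schemes with isogenous $p$-divisible groups need not be isogenous at all (their prime-to-$p$ Tate modules are unconstrained). The paper never needs or asserts this. Its construction is different and asymmetric: for a \emph{single} $\cA$, it picks one fixed representative $B$ in the finitely many polarized isogeny classes of $p$-divisible groups, and produces $\cA'$ from $\cA$ by quotienting by a finite flat subgroup so that $\cA'[p^\infty]\cong B$; no comparison of two given ppav's is made. You also invoke this false equivalence again when you dispatch the last part of the theorem ($C/\bar k$, maximal monodromy $\Sp_{2g}$), so that part of your argument collapses as well; the paper instead proves it by a rigidity argument (the Tate conjecture over a finite field plus absence of endomorphisms under big monodromy shows that all fibers of a connected family with a fixed ordinary fiber are $p$-power isogenous).

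Your proposed Hodge-degree bound via minimizing over $\GSp_{2g}$-compatible Dieudonné lattices in $E$ is both overcomplicated and, as stated, incomplete: the Hodge degree is a global invariant of the $F$-crystal on $C$, not a fiberwise quantity, so Mazur-type inequalities at points do not directly bound it, and you correctly flag this as the obstacle. The paper's route avoids it entirely: once $\cA'[p^\infty]\cong B$ is fixed, the Hodge bundle of $\cA'$ on $\bar C$ is recovered from $B$ alone as $\det\ker\bigl(F:\D_{\bar C}\to \D_{\bar C}^{(p)}\bigr)$, where $\D_{\bar C}$ is the unique log-$F$-crystal extension of $\D(B)$ guaranteed by \Cref{lem:crystaluniquelyextendstobdrypoints} (using the identification of the Hodge bundle on $\bar{\scrA}_{g,n,\F_p}$ with the corresponding kernel for the universal semi-abelian scheme). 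So the Hodge degree is literally a function of the finitely many isomorphism types $B$, with no minimization required. Replacing your minimization step with this observation is the missing idea.
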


\begin{proof}

We first claim that there are finitely many principally polarized $p$-divisible groups occurring as $\cA[p^\infty]$ on $C_k$ (or $C_{\ol k}$) up to polarized isogeny. The functor from $\cA[p^\infty]$ to overconvergent $F$-isocrystals with $\GSp$ structure is fully faithful, and therefore the claim follows by \Cref{thm:Fisocrystalsfinitestuff}(4), and (2) (resp. (3)) for $C_k$ (resp. $C_{\ol k}$) once we meet the conditions. For (3), the $\Q_\ell$-niceness follows from the rational Tate module being defined over $\Q_\ell$. For (2), note that at any $x\in C(k')$ the Frobenius eigenvalues are Weil-$|k'|^{\frac 12}$ numbers by the Weil conjectures and hence constrained to a finite st.
 
Hence we may find a fixed $p$-divisible subgroup $B\supset \cA[p^{\infty}]$ of fixed isomorphism type, such that $\cA'=\cA/(B/\cA[p^{\infty}])$ is principally polarized with $p$-divisible group isomorphic to $B$. 
According to \cite[IV.5.7, V.2.3, VI.1.1]{faltingschai}, on a toroidal compactification of a sufficient level cover $\bar\scrA_{g,n}$ of $\scrA_g$ defined over $\Z_p$, the Hodge bundle $\omega$ may be identified with the top exterior power of $\Fil^1$ of the log Fontaine-Laffaile module associated to $\cU$, where $\cU$ is the universal semi-abelian scheme on $\bar\scrA_{g,n}$. Therefore, $\omega$ on $\bar\scrA_{g,n,\F_p}$ may be identified with $\det\ker(F:\D(\cU[p^\infty])\to \D(\cU[p^\infty])^{(p)})$ where $\D(\cU[p^\infty])$ denotes the log-flat bundle gotten by evaluating the contravariant log Dieudonn\'e module of $\cU_{\F_p}$ on $\bar\scrA_{g,n,\F_p}$.  The same is then true for the stack quotient, and so the Hodge bundle on $C$ is identified with $\det\ker(F:\D_{\bar C}\to \D^{(p)}_{\bar C})$, where $\D_{\bar C}$ is the unique log-$F$-crystal extension of $\D(B)$ guaranteed by \Cref{lem:crystaluniquelyextendstobdrypoints}.  The first part of the theorem now follows.


Finally, we prove the last part of the theorem. Suppose that $C$ is defined over $\ol{k}$ and  $\cA/C$ has algebraic monodromy $\Sp$. By the above, all such abelian schemes over $C$ arise from bounded degree maps to (the stack) $\scrA_g$. Since the moduli of such maps is of finite type, it is sufficient to prove that if $\cA'/C\times X$ is a principally polarized abelian scheme, with $X$ a smooth and connected variety over $\ol{k}$,  such that $\cA'_{C\times x_0}\cong \cA$, then all fibers $\cA'_{C\times x}$ admit a polarized $p$-isogeny to $\cA$. 

To see this, note that image of $\pi_1^{\et}(X)$ in $\ell$-adic monodromy commutes with that of $\pi_1^{\et}(C)$, and is therefore trivial. Now over some finite field $k'/k$ all of $C,\cA,\cA'$ descend. Applying the Tate conjecture over $k'$ we see that $\cA'$ is isogenous to $\pi_C^*\cA$, and thus the $p$-divisible group schemes are isogenous. Since the monodromy is maximal there are no non-trivial endomorphisms, and hence this isogeny must be a polarized isogeny. The claim follows.

\end{proof}

\section{Background: Shimura Varieties}
We collect some basic facts about Shimura varieties here. We refer the reader to \cite{MilneIntro} for background, and \cite{BST} for the specific setup used here.

Let $S=S_K(G,X)$ be a Shimura variety. We let $E$ denote the reflex field.
$\ShimK$ carries a flat principal $G$-bundle $\mathcal{E}/S$, which admits a canonical model over $E$. A choice of Hodge co-character defines a parabolic subgroup $P\subset G$, and gives a parabolic bundle $\mathcal{P}/S$, along with a canonical morphism $\mathcal{P} \rightarrow \mathcal{E}$, where all this data descends to $E$. We note that $\mathcal{P}$ is not flat for the connection on $\mathcal{E}$. 

Let $V$ denote a faithful rational representation of $G$, and $\V\subset V$ denote a lattice. The choice of $\V$ specifies a compact open subgroup of $ G(\A_f)$ by considering the stabilizers of $\V_{\hat\Z}$. We choose the level structure $K$ to be a subgroup of this compact open subgroup, such that it is neat, and such that it acts trivially on $\V/3\V$. Corresponding to $V$ there is a family of $\Q_\ell$ local systems $_{\et}V_\ell$ with $\Z_\ell$ sub-systems $_{\et}\V_\ell$. Moreover, there is a filtered flat vector bundle ${\dR}V$ over $\ShimK$ -- indeed we have that ${\dR}V$ is defined by $\mathcal{E} \times_G V$. The filtration is induced by the fact that ${\dR}V$ is also canonically isomorphic to $\mathcal{P}\times_{G_{E}} V_{E}$. Now, let $p$ be a prime and let $v \mid p$ be a prime of $E$. 

By choosing $N$ to be a large enough integer, we may assume that $\ShimK$ spreads out to a smooth integral model $\mathscr{S}=\integralShimK/\cO_{E}[1/N]$ such that the following conditions are satisfied:

\begin{conditions}\label{cond:integralmodel}\hspace{1em}

\begin{itemize}

    \item $G$ has a reductive model $\mathcal{G} / \Z[1/N]$. There are elements $s_{\alpha} \in \V[1/N]^{\otimes}$ whose point-wise stabilizers define the group $\mathcal{G}$.
    \item For every $p\nmid N$, we have $K =\mathcal{G}(\Z_p) \cdot K^p$, where $K^p$ is the prime-to-$p$ level structure, and $K$ is neat.
    \item $P\subset G_{E}$ is induced by a parabolic subgroup of $\mathcal{G}$ defined over $\cO_{E}[1/N]$.
    \item The map $\mathcal{P} \rightarrow \mathcal{E}$ is induced by a map $\mathscr{P} \rightarrow \mathscr{E}$ over $\mathscr{S}$, where $\mathscr{E}$ is a flat principal $\mathcal{G}$ bundle and $\mathscr{P}$ is a principal bundle for the parabolic of $\mathcal{G}$ defined over $\cO_{E}[1/N]$.
    \item $\integralShimK$ admits a log-smooth compactification $\ol{\integralShimK}$ over $\cO_{E}[1/N]$. 
    \item The Baily-Borel compactification $\ShimK^{\bb}$ has a compatification $\integralShimK^{\bb}$ and a natural map $\ol{\integralShimK}\to \integralShimK^{\bb}$.
    \item The local systems $_{\et}\V_{\ell}$ extend to $\integralShimK[1/\ell]$.
    \item For each finite place $v$ of $\cO_E[1/N]$, $_{\et}\V_p(m)$ is (log)-crystalline\footnote{By which we mean is associated to a log Fontaine-Laffaille module.} on $\integralShimK_{\cO_{E_v}}$ for $m$ sufficiently small so that $V(m)$ has positive Hodge weights. We define $_{\et}\V'_p:={_{\et}\V_p(m)}$.
    \item For each finite place $v$ of $\cO_E[1/N]$, let $_{\FL}\V'/\integralShimK_{\cO_{E_v}}$ denote the log-Fontaine-Laffaille module corresponding to $_{\et}\V'_p$ under the log-Fontaine-Laffaille correspondence. We let $\cris \V'$ be the underlying $F$-crystal, and $\cris V'$ the associated $F$-isocrystal.  Note that $\cris V'$ is semisimple, overconvergent, and tame (again using \cite[Cor 3.15]{kedlaya2022etale}).
    
    \item In general, by the Griffiths bundle of a vector bundle equipped with a locally split filtration $(M,\Fil^\bullet M)$ we mean $\otimes_i \det \Fil^iM$.  The Griffiths bundle of $_{\FL}\V'$ extends naturally to $\ol{\integralShimK}_{\cO_{E_v}}$, and descends amply to $\integralShimK^{\bb}_{\cO_{E_v}}$.
    \item The filtered flat vector bundle ${\dR}V':={{\dR}V(m)}$ (where the twist here denotes shifting the filtration) extends to an integral filtered flat vector bundle ${\dR}\cV'$, which is naturally identified with the filtered flat bundle underlying $_{\FL}\V'$.  The Kodaira-Spencer morphism $T_{S}\to \End(\Gr_{\Fil}{_{\dr}\cV})$ is immersive.
\end{itemize}

\end{conditions}

For the remainder of the paper, we fix a height 1 prime $v$ of $E$ such that $v\mid p, p\nmid N.$  We also drop ``$_K(G,X)$" from the notation and write $S=S_K(G,X)$, $\mathscr{S}=\mathscr{S}_K(G,X)$ for the integral canonical model, etc.  Finally, for $K'\subset K$ an arbitrary closed subgroup, we obtain a pro-variety $S_{K'}:=\displaystyle\varprojlim_{K'\subset K_1} S_{K_1}(G,X)$, where $K_1$ varies over compact open subgroups contained in $K$.

\renewcommand{\integralShimK}{\mathscr{S}}
\renewcommand{\ShimK}{S}
\subsection{Level structures}

By the assumption on $v$, the Hodge-co-character of $G_v$ has conjugacy class defined over $E_v\cong \Q_p$. Let $T_v\subset G_v$ be a maximal torus contained in a Borel. Then we have the following:

\begin{proposition}\label{prop: cocharacters recall}
    
    \begin{enumerate}
        \item Every $\Z_p$-valued co-character of $G$ is conjugate to a $\Z_p$-valued co-character valued in $T$.
        
        \item Let $\mu'$ denote a co-character of $G_{\overline{\Q}_p}$ whose conjugacy-class is defined over $\Q_p$. There is a co-character $\mu: \Gm \rightarrow T$ defined over $\Q_p$ whose base-change to $\overline{\Q}_p$ is conjugate to $\mu'$.

        \item Let $\mu_1,\mu_2$ be two $\Z_p$-valued co-characters that are conjugate over $\overline{\Q}_p$. Then, $\mu_1$ and $\mu_2$ are conjugate over $\Z_p$. 
    \end{enumerate}
\end{proposition}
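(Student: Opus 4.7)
My plan exploits that $G$ has a reductive model over $\Z_p$, so that $G_{\Q_p}$ is unramified and quasi-split, and we may arrange $T\subset B$ to extend to a maximal torus and Borel of the $\Z_p$-model.

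For (1), I would first factor $\mu$ through some maximal torus $T'$ of $G$ over $\Z_p$: its image $\mu(\G_m)$ is a split central subtorus of the smooth connected reductive Levi $L := Z_G(\mu)$, and $L$ admits a maximal torus $T'\supset \mu(\G_m)$ over $\Z_p$ by the standard theory of reductive group schemes; $T'$ is then a maximal torus of $G$. To conjugate $T'$ into $T$ by an element of $G(\Z_p)$, I would analyze the smooth transporter scheme $\{g\in G : gT'g^{-1} = T\}$, which is nonempty over $\overline{\Q}_p$ since all maximal tori are conjugate there, and argue that $T$ and $T'$ specialize to maximal tori of $G_{\F_p}$ of the same type (both are unramified and contain compatible split subtori coming from $\mu$ and the Borel $B$), hence $G(\F_p)$-conjugate by Lang's theorem; Hensel's lemma then lifts the conjugator to $\Z_p$.

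For (2), the conjugacy class of $\mu'$ corresponds to a $\Gal(\overline{\Q}_p/\Q_p)$-stable $W(\overline{\Q}_p)$-orbit $O \subset X_*(T_{\overline{\Q}_p})$. Since $B$ is defined over $\Q_p$, Galois preserves its positive roots and hence the dominant Weyl chamber in $X_*(T)_\R$; each $W$-orbit contains a unique dominant representative, so the dominant element of $O$ is Galois-fixed and defines the desired cocharacter $\mu \in X_*(T)(\Q_p)$.

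For (3), I would study the conjugator scheme $\cT_\mu := \{g\in G : g\mu_1 g^{-1} = \mu_2\}$, a smooth closed subscheme of $G_{\Z_p}$ and torsor under the reductive Levi $Z_G(\mu_1)$, which by hypothesis is nonempty over $\overline{\Q}_p$. Combining (1) and (2), $\mu_1$ and $\mu_2$ share a common dominant translate in $X_*(T)$; since this invariant is unchanged by base change, the reductions $\bar\mu_1, \bar\mu_2$ are $G(\overline{\F}_p)$-conjugate, so $\cT_\mu\otimes\F_p$ is a nonempty torsor under the smooth connected reductive group $Z_G(\mu_1)_{\F_p}$. Lang's theorem trivializes this torsor, and the resulting $\F_p$-point lifts to $\Z_p$ via Hensel. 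I expect the main obstacle to be the conjugation step in (1): the transporter there is a torsor under the \emph{non-connected} normalizer $N_G(T)$, so Lang does not directly apply, and one must exploit the specific unramified structure of the two tori to verify that the obstruction class vanishes.
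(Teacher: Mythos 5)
Your proposal is a genuinely different route from the paper's, which simply cites \cite[Proposition 3.2.1(iii)]{ChaoZhang} for (1), \cite[Lemma 1.1.3]{Kottwitzcochar} for (2), and \cite[Lemma 1.1.3(a), (1.1.3.1)]{Kottwitzcochar} for (3). Your arguments for (2) and (3) are sound: (2) is essentially the dominant-representative argument underlying Kottwitz's lemma, and (3) is a clean and correct replacement for Kottwitz's relative-Weyl-group argument, using that the transporter scheme $\cT_\mu$ is a torsor under the smooth \emph{connected} reductive group scheme $Z_G(\mu_1)$ over $\Z_p$, nonempty over $\overline{\F}_p$ because the $W$-orbit in $X_*(T)$ determining the geometric conjugacy class is insensitive to reduction mod $p$, and then Lang plus Hensel.

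Part (1), however, has a real gap that is separate from (and more fundamental than) the disconnectedness issue you flag at the end. You propose factoring $\mu$ through a maximal torus $T'$ of $Z_G(\mu)$ over $\Z_p$ and then conjugating $T'$ into $T$. But a general maximal torus $T'$ of $Z_G(\mu)$ containing $\mu(\G_m)$ need not be maximally split in $G$, whereas $T$ (lying in a Borel over $\Z_p$) is maximally split; two maximal tori that are not of the same type are simply not conjugate, so the transporter you want is empty over $\F_p$ and no amount of Lang/Hensel will produce a point. Your parenthetical ``both are unramified and contain compatible split subtori'' does not force $T'$ to be maximally split --- containing the image of $\mu$ is much weaker. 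The standard fix is to avoid conjugating the full torus: since $\mu(\G_m)$ is a split $\Z_p$-torus, enlarge it to a maximal split torus $S'\subset G$ over $\Z_p$, use the theorem that all maximal split tori of a reductive group scheme over $\Z_p$ are $G(\Z_p)$-conjugate (SGA3, Exp.~XXVI; Conrad's notes), conjugate $S'$ onto the maximal split subtorus $T_S\subset T$, and conclude $\mu$ now factors through $T_S\subset T$. Alternatively you could repair your argument by explicitly choosing $T'$ inside a Borel pair of the Levi $Z_G(\mu)$ defined over $\Z_p$, so that $T'$ is maximally split, and then reducing the torus-conjugacy to the Borel-conjugacy (Lang for the connected $B$, then $B$ acts transitively on its maximal tori). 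Either way, as written, part (1) is not yet a proof, and the obstacle you identified is the lesser of the two problems.
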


\begin{proof}
The first part follows from \cite[Proposition 3.2.1 (iii)]{ChaoZhang}. The second part follows from \cite[Lemma 1.1.3]{Kottwitzcochar}. For the third part, we may assume that both $\mu_1$ and $\mu_2$ are valued in $T_S\subset T$, where $T_S$ is the maximal split subtorus of $T$. By \cite[Lemma 1.1.3 (a)]{Kottwitzcochar}, we have that $\mu_1$ and $\mu_2$ are conjugate by an element $w$ in the relative Weyl group of $T_S$, $N(T_{S,\Q_p})/Z(T_{S,\Q_p})$. \cite[Equation (1.1.3.1)]{Kottwitzcochar} implies that we may choose a $G(\Z_p)$-valued representative of $w$, as required. 
\end{proof}

By the proposition, we may take a Hodge co-character $\mu\in X(\Z_p)$. Let $P\subset G_v$ denote the corresponding negative parabolic subgroup. 
Let $U$ be the unipotent radical of $P$, $L$ the group $P/U$ which we identify with the Levi subgroup $L_0=Z(\mu)$ associated to $\mu$.
Note that the Shimura axioms imply that $U$ is commutative, and hence the conjugation action of $P$ on $U$ factors through $L$. 

\begin{definition}\label{def:positivePelement}
    Let $g\in L_0(\Q_p)$ be an element. We say that $g$ is semi-positive if $U(\Z_p)\subset gKg^{-1}$.
\end{definition}

We have the following elementary lemma. 
\begin{lemma}\label{lem:semipositivityproperty}
\begin{enumerate}
    \item $g\in L_0(\Q_p)$ is semi-positive iff $U(\Z_p)\subset gU(\Z_p)g^{-1}$
    \item For any $g\in L_0(\Q_p)$, the element $\mu(p^n)g$ is semi-positive for sufficiently large $n$.
    \item If $g\in L_0(\Q_p)$ is semi-positive, we have the equality $P(\Z_p)\cap gKg^{-1} = P(\Z_p)\cap \big(gKg^{-1}\cdot U(\Q_p)\big)$
\end{enumerate}
\end{lemma}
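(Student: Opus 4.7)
The plan is to prove the three parts in order, exploiting the Levi decomposition $\mathcal{P}=\mathcal{L}_0\ltimes\mathcal{U}$ afforded by the reductive integral model $\mathcal{G}$ over $\Z_p$, together with $K_p=\mathcal{G}(\Z_p)$ from \Cref{cond:integralmodel}, and the fact that $g\in L_0(\Q_p)=Z(\mu)(\Q_p)$ centralizes $\mu$ and normalizes $P$, $L_0$, and $U$. None of the three parts requires deep input; the third is the only one where the semi-positivity hypothesis is used non-trivially.

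For Part (1), the direction $(\Leftarrow)$ is immediate from $gU(\Z_p)g^{-1}\subseteq g\mathcal{G}(\Z_p)g^{-1}\subseteq gKg^{-1}$. For $(\Rightarrow)$, I would conjugate the hypothesis $U(\Z_p)\subseteq gKg^{-1}$ to obtain $g^{-1}U(\Z_p)g\subseteq K$; since $g$ normalizes $U$, the left-hand side also lies in $U(\Q_p)$, and hence in $U(\Q_p)\cap K_p=U(\Q_p)\cap\mathcal{G}(\Z_p)=U(\Z_p)$. Rearranging gives $U(\Z_p)\subseteq gU(\Z_p)g^{-1}$.

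For Part (2), since $g\in Z(\mu)$ we have $\mu(p^n)g=g\mu(p^n)$, so by Part (1) it is enough to produce $n$ with $g^{-1}U(\Z_p)g\subseteq\mu(p^n)U(\Z_p)\mu(p^{-n})$. The left-hand side is a fixed lattice in $U(\Q_p)$. Because $U$ is the unipotent radical of the (negative) parabolic attached to $\mu$, the cocharacter $\mu$ acts on $\mathrm{Lie}(U)$ with non-zero weights all of a single sign, and with the sign convention of the paper this makes $\mu(p^n)U(\Z_p)\mu(p^{-n})$ an increasing sequence of lattices exhausting $U(\Q_p)$. Hence the desired containment holds for all sufficiently large $n$.

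Part (3) is the main content, and the only real work is the Levi-decomposition bookkeeping. The inclusion $\supseteq$ is trivial. For the reverse, take $p\in P(\Z_p)$ with $p=ku$, $k\in gKg^{-1}$, $u\in U(\Q_p)$. Writing the unique Levi decomposition $p=lv$ with $l\in L_0(\Z_p)$, $v\in U(\Z_p)$, and setting $w=vu^{-1}\in U(\Q_p)$, we get $k=lw$ as the Levi decomposition of $k\in P(\Q_p)$. Since $g$ normalizes $P$, $L_0$, $U$, conjugating gives $g^{-1}kg=(g^{-1}lg)(g^{-1}wg)\in K\cap P(\Q_p)=\mathcal{P}(\Z_p)=L_0(\Z_p)\ltimes U(\Z_p)$, so uniqueness of Levi decomposition forces $g^{-1}wg\in U(\Z_p)$, i.e., $w\in gU(\Z_p)g^{-1}\subseteq gKg^{-1}$. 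Combined with $v\in U(\Z_p)\subseteq gKg^{-1}$ (which is exactly semi-positivity, via Part (1)), this gives $u=w^{-1}v\in gKg^{-1}$ and therefore $p=ku\in gKg^{-1}$, as required.
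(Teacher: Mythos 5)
Your proof is correct and follows essentially the same route as the paper's: part (1) rests on the normalization $g^{-1}U(\Z_p)g\subset U(\Q_p)$ intersected with $K_p$; part (2) uses the commuting of $g$ with $\mu(p^n)$ together with the fact that $\mu(p^n)U(\Z_p)\mu(p^{-n})$ exhausts $U(\Q_p)$; and part (3) is the same Levi-decomposition bookkeeping using $P(\Z_p)=L_0(\Z_p)U(\Z_p)$, uniqueness of the decomposition, and part (1). The only difference from the paper is cosmetic: you un-conjugate by $g$ and track the $U(\Q_p)$-factor of $k=lw$, while the paper works directly with $t=gkg^{-1}u$ and identifies its Levi factors as $g$-conjugates of $\Z_p$-points; both lead to the same conclusion.
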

\begin{proof}
  The first statement follows from $g^{-1}U(\Z_p)g\subset U(\Q_p)$. The second statement follows from the fact that $\mu(p) U(\Z_p)\mu(p)^{-1} = \frac{1}{p}U(\Z_p)$, where we view $U(\Z_p)$ as a free $\Z_p$-submodule of the vector space $U(\Q_p$.
  For the third statement, note that $P(\Z_p)=L_0(\Z_p)U(\Z_p).$ So suppose that $t\in P(\Z_p)\cap \big(gKg^{-1}\cdot U(\Q_p)\big)$, and write $t=gkg^{-1}u$, where $k\in K, u\in U(\Q_p).$ Note that we must have $k\in P(\Z_p)$, so we write $k=l_0u_0$ for $l_0\in L_0(\Z_p),u_0\in U(\Z_p)$. It follows that $t=(gl_0g^{-1})\cdot (gu_0ug^{-1})$. Hence, we must have that $gl_0g^{-1}\in L(\Z_p)$. Thus we may write $t=(gl_0g^{-1})\cdot u_1$ where $u_1\in U(\Z_p)$, and by (i) it follows that $gu_2g^{-1}$ for $u_2\in U(\Z_p)$. Thus $t= g(l_0u_2)g^{-1}\in P(\Z_p)\cap gKg^{-1}$ and the claim follows.
    
\end{proof}

We define $S_P:=S_{P(\Z_p)K^p}$ and $\pi_P:S_P\to S$ the natural morphism.  Recall that $_{\et}\V_p$ has a $(G(\Z_p),\V_p)$-structure, in the sense of \Cref{def:G-structures}. 

\begin{lemma}\label{lem:lift}
Let $Y/E_v$ be a variety with a $E_v$-morphism $\phi:Y\to S$, and set $M:=\phi^*_{\et}\V_p$, so that $M$ has a $G(\Z_p)$-structure. 

\begin{enumerate}
    \item A lift $\tilde{\phi}:Y\to S_{P}$ is equivalent to a filtration $W_\bullet M$ such that $(M,W_\bullet M)$ has a $(P(\Z_p),(\V_p,W_\bullet\V_p))$-structure compatible with the $G(\Z_p)$-structure on $\V_p$.
    \item Let $g\in P(\Q_p)$ be a semi-positive element. A lift $\tilde{\phi}:Y\to S_{P(\Z_p)K^p\cap gKg^{-1}}$ is equivalent to the above data, together with a graded lattice $\oplus_i M'_i\subset \Gr_W M\otimes \Q_p$, for which $(\Gr_W M,\oplus_i M_i')$ has a $\big(L(\Z_p),(\Gr_W \V_p, \Gr_W g\V_p)\big)$-structure compatible with the $(P(\Z_p),(\V_p,W_\bullet\V_p))$-structure on $(M,W_\bullet M)$.
\end{enumerate} 

The analogous statements hold for the analytifications of the Shimura varieties in the rigid analytic category.
\end{lemma}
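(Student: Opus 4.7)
The plan is to prove both parts by explicating the modular interpretation of the tower $S_{K'} \to S$ in terms of $G$-structures on the \'etale local system $_{\et}\V_p$, then translating each intermediate compact open subgroup into explicit refinement data on the pullback $M = \phi^*{}_{\et}\V_p$.

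For part (1), I would first recall that the forgetful morphism $\pi_P \colon S_P \to S$ is the pro-\'etale bundle with fiber $G(\Z_p)/P(\Z_p)$ associated to the $G(\Z_p)$-torsor $\sIsom({}_{\et}\V_p, \ul{\V_p})/K^p$; this is essentially built into the definition of level structure on Shimura varieties, using $K = \mathcal{G}(\Z_p) \cdot K^p$ from \Cref{cond:integralmodel}. Consequently, a lift $\tilde\phi \colon Y \to S_P$ is tantamount to a reduction of the $G(\Z_p)$-structure on $M$ to a $P(\Z_p)$-structure in the sense of \Cref{def:G-structures}. Since $P \subset G$ is precisely the stabilizer of the filtration $W_\bullet \V_p$, the quotient $G(\Z_p)/P(\Z_p)$ canonically parameterizes the $G(\Z_p)$-orbit of $W_\bullet \V_p$, so reducing the $G(\Z_p)$-structure to a $P(\Z_p)$-structure amounts to specifying a filtration $W_\bullet M$ with the compatibility in the statement.

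For part (2), the extra data corresponds to further refining the level from $P(\Z_p)K^p$ to $P(\Z_p)K^p \cap gKg^{-1}$. By \Cref{lem:semipositivityproperty}(iii), semi-positivity of $g$ gives
\[
P(\Z_p) \cap gKg^{-1} \;=\; P(\Z_p) \cap \bigl(gKg^{-1} \cdot U(\Q_p)\bigr),
\]
so $U(\Z_p) \subset P(\Z_p) \cap gKg^{-1}$, and the quotient map $P(\Z_p) \twoheadrightarrow L_0(\Z_p)$ induces an isomorphism
\[
P(\Z_p) / \bigl(P(\Z_p) \cap gKg^{-1}\bigr) \;\cong\; L_0(\Z_p) / \bigl(L_0(\Z_p) \cap gL_0(\Z_p)g^{-1}\bigr).
\]
The right-hand side naturally parameterizes the $L_0(\Z_p)$-orbit of the graded lattice $\Gr_W(g\V_p) \subset \Gr_W \V_p \otimes \Q_p$. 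Applying the same torsor interpretation as in part (1) to $S_{P(\Z_p)K^p \cap gKg^{-1}} \to S_P$, a further lift is then equivalent to specifying a graded lattice $\bigoplus_i M'_i \subset \Gr_W M \otimes \Q_p$ equipped with an $\bigl(L(\Z_p), \Gr_W g\V_p\bigr)$-structure, compatibly with the $\bigl(P(\Z_p), (\V_p, W_\bullet \V_p)\bigr)$-structure on $(M, W_\bullet M)$. The rigid-analytic statements follow by the same argument, since the torsor interpretation of $S_{K'} \to S_K$ depends only on the analytification of the level-structure data, and passing to pro-\'etale covers in Scholze's sense preserves the $G(\Z_p)$-torsor structure on $\sIsom({}_{\et}\V_p, \ul{\V_p})$.

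The main obstacle I anticipate is the careful bookkeeping in part (2): one must verify that $U(\Z_p)$ acts trivially on the relevant quotient so that only graded (rather than filtered) lattice data remains, which is exactly what \Cref{lem:semipositivityproperty}(iii) provides. A secondary, more formal, subtlety is matching the abstract $G$-structure formalism of \Cref{def:G-structures} with the concrete level-structure data on the Shimura tower, but once the torsor viewpoint is set up this matching reduces to a direct unwinding of definitions.
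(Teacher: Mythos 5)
Your proof is correct and takes essentially the same approach as the paper's: identify $S_P\to S$ with the pro-\'etale space of $(P(\Z_p),W_\bullet\V_p)$-reductions of the $G(\Z_p)$-structure (the paper phrases this as an isomorphism $S_P\cong S^P$ proved over $\C$ and then descended to the reflex field), and then invoke \Cref{lem:semipositivityproperty}(iii) to see that, for semi-positive $g$, the filtered lattice is uniquely recovered from its associated graded, so that only the graded lattice data remains. The paper presents step (2) at the element level (identifying which $t\in P(\Z_p)$ preserve $g\V_p$ vs.\ $\Gr_W g\V_p$) rather than via the quotient bijection $P(\Z_p)/(P(\Z_p)\cap gKg^{-1})\cong L_0(\Z_p)/(L_0(\Z_p)\cap gL_0(\Z_p)g^{-1})$, but the content is the same.
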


\begin{proof}

First note that the correspondence one way is easy, since the local systems on $S_{P},S_{P\cap gKg^{-1}}$ are naturally endowed with the given structures.

For the reverse direction, for part (i) consider the \'etale covering space $S^{P}\to S$ of filtrations of $_{\et}\V_p$ as given. Then the presence of such a filtration on $\pi_{P}^*(_{\et}\V_p)$ gives  a natural morphism $S_{P}\to S^{P}$  which  is easily seen to be an isomorphism over $\C$, and hence must be an isomorphism also over the reflex field. The result follows. 

For part (ii), first note that by arguing similarly we see that the data of a lift is equivalent to a filtration $W_\bullet M$ as described, together with another lattice $M'\subset M\otimes \Q_p$ such that $(M,W_{\bullet}M,M')$ has a $(P(\Z_p)\cap gK_pg^{-1},(\V_p,W_{\bullet}\V_p,g\V_p))$-structure compatible with the $(P(\Z_p),(\V_p,W_\bullet\V_p))$-structure on $M$.

Now, the possible lattices $M'$ correspond, under the choice of local isomorphism $W_\bullet M\cong W_\bullet\V_p$, to $tg\V_p$ for $t\in P(\Z_p)/(P(\Z_p)\cap gKg^{-1})$. We now show this data is equivalent that of the statement of (2).  An element $t\in P(\Z_p)$ preserves $g\V_p$ precisely if $t\in P(\Z_p)\cap gKg^{-1}$ and it preserves $\Gr_W g\V_p$ precisely if $t\in P(\Z_p)\cap (gKg^{-1} U(\Q_p) )$. Since $g$ is semi-positive, it follows from Lemma \ref{lem:semipositivityproperty} that such a $g\V_p$ is determined by its associated graded $\Gr_W g\V_p\subset \Gr_W V$. Thus, given such $M'_i$ we can uniquely recover $M'$, and the claim follows.

\end{proof}

\begin{definition}
  Let $K_1,K_2\subset K$ and $g\in G(\A_f)$ such that $g^{-1}K_1g\subset K_2$. Then we obtain a natural \textit{Hecke morphism} $\pi_{g,K_1,K_2}:S_{K_1}\to S_{K_2}$ given over $\C$ by $$G(\Q)\backslash X\times G(\A_f)/K_1\ni[(x,h)]\to [(x,hg)]\in G(\Q)\backslash X\times G(\A_f)/K_2.$$ If context is clear, we simply write $\pi_g$, omitting the level structures.
\end{definition}

We record the following properties of Hecke morphism that shall be crucial to us:
\begin{lemma}\label{lem:define mu(p)}
There are Hecke morphisms $\pi_{\mu(p^n)}:S_{P}\to S$.
\end{lemma}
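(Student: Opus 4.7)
The plan is to apply the Hecke-morphism construction of the preceding definition directly with $g = \mu(p^n) \in G(\Q_p) \subset G(\A_f)$, taking source level $K_1 = P(\Z_p)K^p$ and target level $K$. The only substantive point is the verification of the defining containment
\[\mu(p^n)^{-1}\,P(\Z_p)K^p\,\mu(p^n) \subset K,\]
after which the rule $[(x,h)]\mapsto[(x,h\mu(p^n))]$ descends to a well-defined morphism of double-coset spaces, and hence to an algebraic morphism of (pro-)Shimura varieties $\pi_{\mu(p^n)}:S_P\to S$ over $E$.

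Since $\mu(p^n)$ lies in $G(\Q_p)$ and commutes with $K^p$, it suffices to verify $\mu(p^n)^{-1}\,P(\Z_p)\,\mu(p^n) \subset \mathcal{G}(\Z_p)$. Using the Levi decomposition $P(\Z_p)=L_0(\Z_p)\,U(\Z_p)$ --- which is available because $P$ comes from a $\Z_p$-parabolic of $\mathcal{G}$ by \Cref{cond:integralmodel} --- any $lu\in P(\Z_p)$ satisfies
\[\mu(p^n)^{-1}\,lu\,\mu(p^n)=l\cdot\mu(p^n)^{-1}\,u\,\mu(p^n),\]
since $\mu$ is valued in $L_0=Z(\mu)$ and hence commutes with $L_0$ pointwise. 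The first factor lies in $L_0(\Z_p)\subset \mathcal{G}(\Z_p)$. For the second, the identity $\mu(p)\,U(\Z_p)\,\mu(p)^{-1}=p^{-1}U(\Z_p)$ recorded in the proof of \Cref{lem:semipositivityproperty}(2) --- which reflects that $\mu$ has weight $-1$ on $\Lie U$, consistent with the sign convention defining $P$ as the negative parabolic --- yields $\mu(p^n)^{-1}\,U(\Z_p)\,\mu(p^n)=p^n\,U(\Z_p)\subset U(\Z_p)\subset \mathcal{G}(\Z_p)$. Equivalently, $\mu(p^n)$ itself is semi-positive in the sense of \Cref{def:positivePelement} for every $n\geq 0$.

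With the containment in hand, $\pi_{\mu(p^n)}$ is defined on the pro-variety $S_P$ by the double-coset formula. Concretely it can be realized as the inverse limit of honest Hecke morphisms $S_{K'}\to S$ as $K'$ ranges over compact-open subgroups with $P(\Z_p)K^p \subset K' \subset K\cap \mu(p^n)K\mu(p^n)^{-1}$; these form a cofinal system in closed subgroups containing $P(\Z_p)K^p$ because the right-hand intersection contains $P(\Z_p)K^p$ by the computation above. There is no substantive obstacle: the entire content of the lemma is the group-theoretic containment verified above, which is a direct consequence of the commutativity of $U$ (a Shimura-datum axiom) together with the convention under which $P$ is the negative parabolic of $\mu$.
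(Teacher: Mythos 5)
The paper states this lemma without proof, treating the underlying group-theoretic containment as immediate, so there is no argument in the paper to compare against. Your verification is exactly the content that is being elided: one must check $\mu(p^n)^{-1}P(\Z_p)K^p\mu(p^n)\subset K$, which (after commuting past $K^p$) reduces to $\mu(p^n)^{-1}P(\Z_p)\mu(p^n)\subset\mathcal{G}(\Z_p)$, and your computation via the Levi decomposition $P(\Z_p)=L_0(\Z_p)U(\Z_p)$, the centrality of $\mu$ in $L_0$, and the contraction $\mu(p^n)^{-1}U(\Z_p)\mu(p^n)=p^nU(\Z_p)$ (which is the same identity the paper records in the proof of \Cref{lem:semipositivityproperty}) is correct and complete. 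The observation that $\mu(p^n)$ is itself semi-positive, and the remark on realizing $\pi_{\mu(p^n)}$ on the pro-variety $S_P$ as a limit of Hecke morphisms at finite level, are both accurate and tie the statement cleanly to the surrounding framework.
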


We shall require the following lemma:

\begin{lemma}\label{lem:heckelocsyspullback}
    Let $\pi_g:S_{K_1}\to S_{K_2}$ be a Hecke morphism. Then on $S_{K_1}$, the the pair $(\V_{\et},\pi_g^*\V_{\et})$ has a $\big(K_1, (\V_p,g\V_p)\big)$ - structure. 
\end{lemma}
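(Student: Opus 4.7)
The plan is to work on the pro-\'etale ``universal cover'' $\widetilde{S} := \varprojlim_{K'} S_{K'}$, with $K'$ ranging over sufficiently small compact open subgroups of $G(\A_f)$, equipped with its canonical right action of $G(\A_f)$ coming from the double-coset description $S_{K'}(\C) = G(\Q)\backslash X\times G(\A_f)/K'$. For any such $K'$, the projection $\widetilde{S}\to S_{K'}$ is a pro-\'etale $K'$-torsor, and by construction $_{\et}\V_p$ on $S_{K'}$ is the local system associated to this torsor via the composition $K'\hookrightarrow G(\A_f)\to \GL(\V_p)$. This realizes the tautological $(K',\V_p)$-structure of $_{\et}\V_p$ on $S_{K'}$.

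First I would observe that the Hecke morphism $\pi_g:[(x,h)]\mapsto[(x,hg)]$ lifts tautologically to the self-map $R_g:\widetilde{S}\to\widetilde{S}$, $(x,h)\mapsto(x,hg)$, fitting into a commuting square
\[
\begin{tikzcd}
\widetilde{S} \arrow[r, "R_g"] \arrow[d] & \widetilde{S} \arrow[d] \\
S_{K_1} \arrow[r, "\pi_g"] & S_{K_2}
\end{tikzcd}
\]
whose vertical maps are the torsor projections for $K_1$ and $K_2$. Moreover $R_g$ intertwines right multiplication by $k_1\in K_1$ on the source with right multiplication by $g^{-1}k_1g\in K_2$ on the target, and the hypothesis $g^{-1}K_1g\subset K_2$ is precisely what guarantees the compatibility needed for descent.

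Next I would pull back the canonical trivialization of $_{\et}\V_p$ on $\widetilde{S}$ (viewed as a local system on $S_{K_2}$) along $R_g$ to obtain a trivialization of $\pi_g^*{_{\et}\V_p}$ on $\widetilde{S}$ with underlying module $\V_p$ but with the twisted $K_1$-action $k_1\cdot v=(g^{-1}k_1g)v$. Via the isomorphism $\V_p\to g\V_p$, $v\mapsto gv$ — which one checks is $K_1$-equivariant directly, and is well-defined because both $\V_p$ and $g\V_p$ are $K_1$-stable as a consequence of $K_1,\,g^{-1}K_1g\subset K\subset \mathrm{Stab}(\V_p)$ — this twisted $K_1$-representation on $\V_p$ is identified with the standard $K_1$-representation on $g\V_p\subset V_p$.

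Combining the two trivializations on $\widetilde{S}$ yields an isomorphism of pairs $(_{\et}\V_p,\pi_g^*{_{\et}\V_p})\cong(\V_p,g\V_p)$ realized inside the constant sheaf with fiber $V_p$, and this isomorphism is $K_1$-equivariant by construction. Descending along the $K_1$-torsor $\widetilde{S}\to S_{K_1}$ then produces the desired section of $K_1\backslash\sIsom\bigl((_{\et}\V_p,\pi_g^*{_{\et}\V_p}),(\V_p,g\V_p)\bigr)$, which is the required $(K_1,(\V_p,g\V_p))$-structure. The only point needing real care is keeping track of the direction of the conjugation $k_1\mapsto g^{-1}k_1g$; modulo this, the entire argument is bookkeeping with the double-coset uniformization.
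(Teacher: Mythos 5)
Your proof is correct and takes essentially the same approach as the paper. The paper's proof is a one-liner — it observes that the natural isomorphism $V_p\to\pi_g^*V_p$ carries $\V_p$ to $g\V_p$ over $\C$ and declares the claim to follow — and what you have written is a careful unpacking of exactly that observation via the pro-\'etale cover $\widetilde{S}\to S_{K_1}$, keeping explicit track of the $K_1$-torsor descent, the conjugation twist $k_1\mapsto g^{-1}k_1g$ induced by $R_g$, and the intertwiner $v\mapsto gv$.
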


\begin{proof}
The natural isomorphism $V_p\to \pi_g^*V_p$ identifies $\V_p$ with $g\V_p$ over $\C$, and the claim follows.    
\end{proof}

\subsection{$p$-Hecke orbit}

We require a version of Hecke-correspondence that extends to characteristic $p$. Let $\mathscr{S}$ denote the integral model for $S$ at a good prime $p$. For $g\in G(\Q_p)$ we let $T_g\subset S\times S$ denote the corresponding image of the Hecke correspondence, and its closure $\mathscr{T}_g:=\ol{T_g}\subset\mathscr{S}\times \mathscr{S}$. We refer to $\mathscr{T}_g$ as the \textit{integral} $p$-Hecke correspondence.

\begin{definition}\label{def:pheckeorbit}
    Let $X/\Z_p$ be a scheme. We say that two maps $f_1,f_2:X\to \mathscr{S}$ are in the same \textit{$p$-adic Hecke orbit} if $(f_1,f_2):X\to \mathscr{S}\times \mathscr{S}$ factors through $\mathscr{T}_g$ for some $g\in G(\Q_p)$. This is an equivalence relation, as follows from the following Lemma.
\end{definition}

\begin{lemma}
    Let $X/\Z_p$ be a scheme. The property of being in the same $p$-Hecke orbit is an equivalence relation.
\end{lemma}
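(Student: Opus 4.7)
The plan is to verify reflexivity, symmetry, and transitivity in turn. Reflexivity and symmetry follow quickly from the behavior of Zariski closure under automorphisms of $\mathscr{S}\times\mathscr{S}$; the main content is transitivity, which requires transporting the composition law for Hecke correspondences from characteristic zero to the integral model.

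For reflexivity, take $g=e$: then $T_e=\Delta_S$, and $\overline{\Delta_S}=\Delta_{\mathscr{S}}$ in $\mathscr{S}\times\mathscr{S}$ (since $\mathscr{S}$ is flat over $\cO_{E_v}$, the closed embedding $\Delta_{\mathscr{S}}$ agrees with the flat closure of $\Delta_S$), so any $(f,f)$ factors through $\mathscr{T}_e$. For symmetry, the swap involution $\sigma:\mathscr{S}\times\mathscr{S}\to\mathscr{S}\times\mathscr{S}$ is an automorphism and hence preserves Zariski closures; from the adelic description one reads off $\sigma(T_g)=T_{g^{-1}}$, so $\sigma(\mathscr{T}_g)=\mathscr{T}_{g^{-1}}$, giving $(f_1,f_2)\in\mathscr{T}_g \Longleftrightarrow (f_2,f_1)\in\mathscr{T}_{g^{-1}}$.

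For transitivity, suppose $(f_1,f_2)$ factors through $\mathscr{T}_g$ and $(f_2,f_3)$ through $\mathscr{T}_h$. These factorizations agree on the middle copy of $\mathscr{S}$ (both give $f_2$), so the universal property of the fiber product yields a map $X\to W:=\mathscr{T}_g\times_{\mathscr{S}}\mathscr{T}_h$ (formed along the second projection of $\mathscr{T}_g$ and the first of $\mathscr{T}_h$), whose composition with the outer projection $W\to\mathscr{S}\times\mathscr{S}$, $((x,y),(y,z))\mapsto(x,z)$, recovers $(f_1,f_3)$. In characteristic zero, the standard Hecke composition produces a set-theoretic identity $T_g\cdot T_h=\bigcup_i T_{k_i}$, where $\{k_i\}$ is a finite set of representatives of the double cosets comprising $KgK\cdot KhK$. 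Since each $\mathscr{T}_g$ is flat over $\cO_{E_v}$ (as the closure of a characteristic-zero subscheme), and the projections $\mathscr{T}_g\to\mathscr{S}$ are finite flat (being scheme-theoretic images of the finite étale covers $\mathscr{S}_{K\cap gKg^{-1}}\to\mathscr{S}$), the fiber product $W$ is flat over $\cO_{E_v}$. Consequently $W_{E_v}$ is dense in $W$, and the image of $W$ in $\mathscr{S}\times\mathscr{S}$ lies in the Zariski closure of $\bigcup_i T_{k_i}$, namely $\bigcup_i\mathscr{T}_{k_i}$. When $X$ is irreducible (the case of primary interest in the paper), its image under $(f_1,f_3)$ factors through a single component $\mathscr{T}_{k_i}$, yielding the required Hecke element $g'=k_i$.

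The expected main obstacle is precisely this mixed-characteristic control of $W$: a priori there could be irreducible components of $W$ supported purely in the special fiber which are not governed by characteristic-zero Hecke composition and might map outside $\bigcup_i\mathscr{T}_{k_i}$. The flatness argument above is what rules this out, and it reduces to the finite flatness of $\mathscr{T}_g\to\mathscr{S}$ inherited from the level cover $\mathscr{S}_{K\cap gKg^{-1}}\to\mathscr{S}$. The case of reducible $X$ is handled by arguing on each connected component, which is sufficient for the applications in the paper (where $X$ is an irreducible curve).
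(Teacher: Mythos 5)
Your argument follows the paper's own route: reflexivity via $g=e$, symmetry via the swap, and transitivity via the fiber product $W=\mathscr{T}_{g}\times_{\mathscr{S}}\mathscr{T}_{h}$ together with the characteristic-zero Hecke composition law $T_g\cdot T_h=\bigcup_i T_{k_i}$. You also rightly make explicit the subtlety that the paper elides with ``Taking Zariski-closure completes the proof'': one must rule out irreducible components of $W$ supported entirely in the special fiber, which could escape $\bigcup_i\mathscr{T}_{k_i}$.

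However, the fix you propose rests on an unjustified claim: that the projections $\mathscr{T}_g\to\mathscr{S}$ are finite flat ``being scheme-theoretic images of the finite \'etale covers $\mathscr{S}_{K\cap gKg^{-1}}\to\mathscr{S}$.'' The paper does not construct a finite \'etale integral model $\mathscr{S}_{K\cap gKg^{-1}}\to\mathscr{S}$ for $g\in G(\Q_p)$; deeper-level-at-$p$ covers are only produced over the ordinary locus (\Cref{lem:intstructurecover}), precisely because this is delicate. Moreover, even granting such a cover, the scheme-theoretic image of a finite flat morphism need not itself be finite flat over the target, nor is it established that the Zariski closure $\mathscr{T}_g$ coincides with that image. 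Your flatness of $\mathscr{T}_g$ over $\cO_{E_v}$ (as a flat closure) is correct, but that alone does not give flatness of the fiber product $W$ over $\cO_{E_v}$; for that you genuinely need the flatness of $\mathscr{T}_g\to\mathscr{S}$, which is the unsupported step. So while you have correctly located where both your argument and the paper's require an extra input, the particular input you invoke has not been established and does not follow formally from what is in the paper. The reflexivity and symmetry parts, and your remark on reducible $X$, are fine.
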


\begin{proof}
    Reflexivity follows from $g=\id_G$ and symmetry is obvious, so it remains to prove transitivity. So suppose $f_1,f_2,f_3:X\to \mathscr{S}$ are morphisms with $(f_1,f_2):X\to \mathscr{S}\times \mathscr{S}$ factoring through $\mathscr{T}_g$ and $(f_2,f_3):X\to \mathscr{S}\times \mathscr{S}$ factoring through $\mathscr{T}_h$. It follows that $(f_1,f_3):X\to \mathscr{S}\times \mathscr{S}$ factors through the image of $\mathscr{T}_{g_1}\times_{\mathscr{S}}\mathscr{T}_{g_2}$ in $S\times S$. Now in characteristic $0$, the image of $T_{g_1}\times_S T_{g_2}$ in $S\times S$ factors through $\cup T_h$ for some finite union of $h\in G(\Q_p)$ spanning  left coset representatives for the double coset $K_pg_1K_pg_2K_p$. Taking Zariski-closure completes the proof.
\end{proof}

\section{Background: Crystals}

\subsection{Lattices}
The main goal of this section is to prove the following lemmas. 

\begin{lemma}\label{lem:crystaluniquelyextendstobdrypoints}
    Let $C/k$ be a smooth irreducible curve, and $\ol{C}$ its proper smooth compactification. Let $\E$ be an $F$-crystal on $C$. Then $\E$ extends to a log-F-crystal on $\ol{C}$ it at most one way.
    
\end{lemma}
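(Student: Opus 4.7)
The plan is to reduce the question to a local statement at each of the finitely many boundary points $x \in \bar C \setminus C$, and then to show that the combined constraints of a log connection and an integral Frobenius structure pin down the extension uniquely.

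First I would reduce uniqueness of the extension to the existence of horizontal Frobenius-equivariant sections: given two log-$F$-crystal extensions $\Ee_1,\Ee_2$ of $\Ee$, form the log-$F$-crystal $\sHom(\Ee_1,\Ee_2)$ on $\bar C$, whose restriction to $C$ carries a canonical horizontal Frobenius-equivariant section (the identity map of $\Ee$). If I can show that any such section of the restriction of a log-$F$-crystal to $C$ extends uniquely to $\bar C$, then applying this to both $\sHom(\Ee_1,\Ee_2)$ and $\sHom(\Ee_2,\Ee_1)$ produces mutually inverse morphisms $\Ee_1 \to \Ee_2$ and $\Ee_2 \to \Ee_1$ extending the identity, which gives the uniqueness.

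Second, this extension question is local on $\bar C$, so I would work on the formal neighborhood of each boundary point. Choosing a uniformizer $t$ at $x$, a log-$F$-crystal on the log formal disk corresponds to a finite free $W(k)[[t]]$-module $M$ equipped with a log connection $\nabla\colon M \to M \otimes \tfrac{dt}{t}$ and a Frobenius $\phi\colon \sigma^*M \to M$ (an isomorphism after inverting $p$), the two structures being compatible. A horizontal Frobenius-equivariant section of the restriction to the punctured disk is an element $s \in M[1/t]$ with $\nabla(s)=0$ and $\phi(\sigma^*s)=s$, and the task is to show $s \in M$.

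Third, I would combine the two structures. Writing $\nabla = d + A\,\tfrac{dt}{t}$ and expanding $s = \sum_{i \geq -N} s_i t^i$, horizontality gives a recursive equation $(i+A_0)s_i = -\sum_{k \geq 1} A_k s_{i-k}$ on the coefficients, where $A_0$ is the residue of the log connection. This alone bounds the admissible pole orders only in terms of integer eigenvalues of $A_0$. The additional Frobenius-equivariance $\phi(\sigma^*s) = s$ provides the decisive constraint: if $s$ had a pole of order $k>0$ at $t=0$, then $\sigma^*s$ would have a pole of order $kp$, and since $\phi$ becomes an isomorphism only after inverting $p$ (so $\phi^{-1}$ introduces only a bounded, $k$-independent pole correction $c$), one obtains $kp - c \leq k$, forcing $k \leq c/(p-1)$. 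Iterating this bound together with the recursion from horizontality, one squeezes out all negative coefficients, yielding $s \in M$.

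The main obstacle lies in the last step: neither the log connection alone nor the Frobenius alone is enough to force integrality (indeed, horizontal sections can have poles dictated by integer residue eigenvalues, and there exist $F$-stable lattices of arbitrary pole order). I expect the careful part of the argument to be tracking how the $\phi$-equivariance amplifies pole orders by the factor $p$ while the connection relates coefficients of differing $t$-degree, so that the two conditions together admit only the trivial solution outside $M$.
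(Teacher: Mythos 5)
The paper's proof takes a different and cleaner route than yours: it first invokes Kedlaya's full-faithfulness theorem for restriction of log-isocrystals from $\ol{C}$ to $C$, obtaining a canonical isomorphism $\iota:\E'[1/p]\to\E''[1/p]$ of log-$F$-\emph{iso}crystals extending the identity, and then upgrades this to an isomorphism of crystals by a Hartogs-type argument --- after evaluating on a log-smooth lift $(\ol{\cC},\cD)$, the isomorphism is defined away from the special fiber of $\cD$, a codimension-two locus, over which morphisms of vector bundles automatically extend. You instead try to prove the full faithfulness directly by a local coefficient analysis at the cusps. This is a genuinely different strategy, but as written it has two gaps, the first of which is decisive.

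First, you assume the horizontal Frobenius-equivariant section $s$ admits a finite Laurent expansion $s=\sum_{i\geq -N}s_it^i$. But a section of the evaluated crystal over the formal punctured disk lives in a module over the $p$-adic completion of $W(k)[[t]][1/t]$, which contains series like $\sum_{n\geq 0}p^nt^{-n}$ with unbounded $t$-pole order. Nothing in the log-crystal data a priori forces $s$ into $M[1/t]$, and once $s$ has infinitely many negative coefficients the pole-amplification argument has nothing to bite on. Controlling this is exactly the nontrivial $p$-adic-analytic content of Kedlaya's theorem, which is why the paper cites it rather than rederiving it.

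Second, even granting a finite pole order $k$, the estimate $kp-c\leq k$ conflates $p$-integrality with $t$-integrality. The inverse $\phi^{-1}$ is $W(k)[[t]][1/p]$-linear: it introduces a bounded power of $p$ in the denominator but no $t$-poles at all. Consequently the correct comparison from $\sigma^*s=\phi^{-1}(s)$ is $pk\leq k$, which already forces $k=0$ with no role for the residue recursion of the connection. Your inequality gives only $k\leq c/(p-1)$, and the proposed ``iterate with the backward recursion from horizontality'' does not obviously close the gap, since that recursion determines $s_i$ from $s_{i-1},s_{i-2},\ldots$ and so does not propagate vanishing upward from deep negative degrees. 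If you want to salvage a direct local argument, the place to concentrate effort is on establishing finite pole order in the first place; after that the Frobenius alone suffices and the connection is a red herring for this particular lemma.
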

\begin{proof}
    Suppose $\E',\E''$ are two log-F-crystals on $\ol{C}$ which extend $\E$. 
Kedlaya \cite[Thm 5.3 + Thm 7.3]{kedlaya2022notes} proved the fully-faithfullness of the restriction functor on the level of isocrystals. We thus have a canonical isomorphism $\iota:\E'[\frac1p]\to \E''[\frac1p]$ compatible with the identity morphism on $\E$. We must show that $\iota$ is induced from an isomorphism between $\E'$ and $\E''$.

To that end, Let $D=\ol{C}\backslash C$, and let $(\ol{\cC},\cD)$ be a log-smooth lift to $W(k)$ of $(\ol{C},D)$. Then evaluating $\E',\E''$ on the lift $\ol{\cC},\cD$ give vector bundles $\cE',\cE''$ with a log-connection on $\ol{\cC}$, and $\iota$ gives an isomorphism on the generic fibers $\iota_\eta: \cE'_\eta\to \cE''_\eta$. By assumption on $\iota$, we know that $\iota_\eta$ extends to an isomorphism over $\ol{\cC}\backslash D$. Since morphisms of vector bundles extend over codimension 2, we see that $\iota_\eta$ extends to an isomorphism of vector bundles over all of $\ol{\cC}$. Since it also must respect the connection, the proof is complete.

\end{proof}

We will also need the following result. 
\begin{lemma}\label{lem:crystalsagreeingatallclosedpoints}
    Let $C/k$ be a smooth curve and let $\E_1, \E_2 \subset E$ be two $F$-crystals inside an $F$-isocrystal $E$. Suppose that for every closed point $x\in C(\bar{k})$, we have an equality of $F$-crystals $\E_{1,x} = \E_{2,x}$ inside the isocrystal $E_x$. Then, we have $\E_1 = \E_2$.
\end{lemma}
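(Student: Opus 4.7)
The plan is to reduce the claim to a local Nakayama argument. Since $\mathcal E_1=\mathcal E_2$ is a local condition on $C$, I would first cover $C$ by affine opens $U$ admitting a smooth lift $\tilde U/W(k)$; such lifts exist since $C/k$ is smooth. Evaluating the crystals on $\tilde U$ yields vector bundles $\tilde{\mathcal{E}}_1,\tilde{\mathcal{E}}_2$ realized as $\mathcal{O}_{\tilde U}$-lattices inside the vector bundle $\tilde E$ on $\tilde U[1/p]$ obtained from $E$. The quotient $Q:=(\tilde{\mathcal{E}}_1+\tilde{\mathcal{E}}_2)/\tilde{\mathcal{E}}_1$ is a coherent sheaf on $\tilde U$ which is $p$-power torsion, since both lattices share the generic fiber $\tilde E$. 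In particular $\mathrm{Supp}(Q)$ is contained in the special fiber $U$, and it suffices to show $Q=0$.

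Suppose for contradiction that some closed point $x\in U$ lies in $\mathrm{Supp}(Q)$. Pick any lift $\tilde x\colon\Spec W(\kappa(x))\to\tilde U$ of $x$, which exists by smoothness. The crystal property identifies $\tilde x^*\tilde{\mathcal{E}}_i$ canonically with the fiber $\mathcal{E}_{i,x}$ inside $E_x=\tilde x^*\tilde E$, and the hypothesis then gives $\tilde x^*(\tilde{\mathcal{E}}_1+\tilde{\mathcal{E}}_2)=\tilde x^*\tilde{\mathcal{E}}_1$, so $\tilde x^*Q=0$. Completing at $x$ gives $\widehat{\mathcal{O}}_{\tilde U,x}\cong W(\kappa(x))[[t]]$, where $t$ is a local equation for the section $\tilde x$, and $\tilde x^*Q=0$ reads $\widehat Q_x/t\widehat Q_x=0$. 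Since $t$ lies in the maximal ideal and $\widehat Q_x$ is finitely generated, Nakayama's lemma yields $\widehat Q_x=0$, contradicting $x\in\mathrm{Supp}(Q)$. Hence $Q=0$, so $\tilde{\mathcal{E}}_2\subset\tilde{\mathcal{E}}_1$, and the symmetric argument supplies equality.

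The only step requiring care is the compatibility of $\tilde x^*$ with the sum of lattices, i.e.\ the identification of $\tilde x^*(\tilde{\mathcal{E}}_1+\tilde{\mathcal{E}}_2)$ with $\tilde x^*\tilde{\mathcal{E}}_1+\tilde x^*\tilde{\mathcal{E}}_2$ inside $E_x$; this follows from right-exactness of pullback together with the fact that each $\tilde x^*\tilde{\mathcal{E}}_i$ is a $W(\kappa(x))$-lattice inside $\tilde x^*\tilde E$. The Frobenius structure plays no role in the argument; only the underlying crystal data is used.
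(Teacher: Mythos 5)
Your Nakayama strategy has a genuine gap, and it is exactly at the step you flag as ``the only step requiring care.''  The identification of $\tilde x^*(\tilde{\mathcal{E}}_1+\tilde{\mathcal{E}}_2)$ with $\tilde x^*\tilde{\mathcal{E}}_1+\tilde x^*\tilde{\mathcal{E}}_2$ \emph{inside} $E_x$ presupposes that the natural map $\tilde x^*(\tilde{\mathcal{E}}_1+\tilde{\mathcal{E}}_2)\to E_x$ is injective, i.e.\ that $\tilde{\mathcal{E}}_1+\tilde{\mathcal{E}}_2$ is flat at $\tilde x$.  But the sum of two free lattices inside a vector bundle need not be locally free (e.g.\ $(p)+(t)=(p,t)\subset W[[t]]$), and right-exactness of pullback only gives you a surjection $\tilde x^*\tilde{\mathcal{E}}_1\oplus\tilde x^*\tilde{\mathcal{E}}_2\twoheadrightarrow\tilde x^*(\tilde{\mathcal{E}}_1+\tilde{\mathcal{E}}_2)$, not the injectivity you need.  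From the exact sequence $\tilde x^*\tilde{\mathcal{E}}_1\to\tilde x^*(\tilde{\mathcal{E}}_1+\tilde{\mathcal{E}}_2)\to\tilde x^*Q\to0$, all the hypothesis guarantees is that the \emph{images} of the first two terms in $E_x$ coincide; if the middle map to $E_x$ has a kernel, $\tilde x^*Q$ can be nonzero.

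This failure is not cosmetic: the claim you need --- that two sub-crystals agreeing at one $W$-point coincide locally --- is simply false over the completed local ring.  Over $\widehat{\mathcal{O}}=W(\kappa)[[t]]$, take the trivial rank-$2$ isocrystal with $\nabla=d$ and the two free sub-lattices $M_1=\widehat{\mathcal{O}}^{\,2}$ and $M_2=\widehat{\mathcal{O}}\cdot(1,0)+\widehat{\mathcal{O}}\cdot(t^p/p,\,1)$.  Both are stable under the connection (since $\nabla(t^p/p,1)=t^{p-1}(1,0)\,dt$) with nilpotent $p$-curvature, they agree on the fiber at every $W$-lift $t\mapsto pw$ of the closed point (as $(pw)^p/p\in W$), yet $M_1\neq M_2$; one computes $(M_1+M_2)/M_1\cong\kappa[[t]]$, whose fiber at $t=0$ is $\kappa\neq0$, directly contradicting your $\tilde x^*Q=0$.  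So not only is the justification wrong, a purely local Nakayama argument at a single point cannot succeed.

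The paper's proof is essentially global and sidesteps this: it takes the minimal $n$ with $p^nM_1\subset M_2$, observes that the resulting change-of-basis matrix has some entry $g$ with $g\not\equiv0\bmod p$, and then --- using that $C$ is a curve over a field, so $g\bmod p$ is a nonzero regular function with a nonvanishing locus --- chooses a closed point $x$ of $C$ where $g(x)\not\equiv0\bmod p$, at which the agreement of fibers forces $n=0$.  The freedom to choose the point according to where a function is nonzero mod $p$ is the crux, and it is precisely what your argument, which fixes $x$ in the support of $Q$, does not have.
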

\begin{proof}
    By the Lemma \ref{lem:crystaluniquelyextendstobdrypoints}, it suffices to replace $C$ by a Zariski-open. Therefore, we may assume that $C$ is affine, and there is a smooth lift $\cC/W(k)$ such that the vector bundles underlying $\E_1(\cC)$ and $\E_2(\cC)$ are free. We are now reduced to proving the following statement. Let $R = \cO(\cC)$\footnote{$R$ is a smooth and $p$-adically complete $W=W(\overline{k})$-algebra}, let $M$ be a finitely generated free $R[1/p]$-module, and let $M_1,M_2 \subset M$, be free $R$-lattices. Suppose that the restriction of $M_1$ and $M_2$ to every $W$-point of $\cC$ agree. Then, it suffices to prove that $M_1 = M_2$. 
    
    To that end, let $e_1 \hdots e_n$ (resp. $f_1 \hdots f_n$) be a basis for $M_1$ (resp. $M_2$).  Let $n$ denote the smallest non-negative integer such that $p^nM_1 \subset M_2$. Let $A \in M_n(R)$ be the change-of-basis matrix that expresses the $p^n e_i$ in terms of the $f_i$. The hypothesis on $n$ implies that there exists at least at least one entry $g = g_{i,j}$ of $A$ which is not a multiple of $p$. By permuting the bases $e_i$ and $f_i$, we may assume that $i = j = 1$. As $g\notin  pR$, we have that there is a $g \not \equiv 0 \mod p$, and so there is a point on $x\in C$ such that $g(x) \neq 0$. Therefore, any $W$-point $\tilde{x}$ of $\cC$ lifting $x\in C$ satisfies $g(x) \not \equiv 0 \mod p$. It follows that $p^n e_1|_{\tilde{x}}$ is a primitive $W$-linear combination of the $f_i|_{\tilde{x}}$. Therefore, the only way for $e_1|_{\tilde{x}}$ to be a $W$-linear combination of the $f_i|_{\tilde{x}}$ is for $n$ to equal 0. It follows that $M_1\subset M_2$. Repeating this argument with the roles of $M_1$ and $M_2$ reversed yields the lemma.

\end{proof}

\begin{lemma}\label{lem: ordinary crystals agreeing at a point}
Let $C$ be a connected smooth curve, and let $\E_1,\E_2 \subset E$ be two $F$-crystals inside an $F$-isocrystal $E$. Suppose that $E$ is ordinary, and that $\E_1$ and $\E_2$ agree at a point. Then, $\E_1 = \E_2$.
\end{lemma}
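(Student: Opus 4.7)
The plan is to reduce the problem to the case of isoclinic (unit-root after a Tate twist) $F$-isocrystals on $C$, where lattices are rigidly determined by their fiber at a single point. Since $E$ is ordinary, its slope filtration splits canonically into a direct sum $E = \bigoplus_i E_i$ with $E_i$ isoclinic of integer slope $i$. I will first decompose each lattice as $\E_k = \bigoplus_i (\E_k \cap E_i)$, and then invoke the isoclinic rigidity on each summand.

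For the decomposition step: at any closed point $y \in C$, the Dieudonn\'e--Manin classification over $W(\bar{k})$ shows that any ordinary $F$-stable lattice in the ordinary isocrystal $E_y$ decomposes as a direct sum of lattices in the isoclinic summands $E_{i, y}$; the key ingredient is that an $F$-stable sublattice can be isolated to each slope piece by applying the operator $F^n - p^{ni}$ and exploiting that $p^{ni} - p^{nj}$ is a $p$-adic unit times a power of $p$ for $i \neq j$. Granting that the $\E_k$ are ordinary at every closed point (which is automatic in the intended application on the ordinary locus, where they arise from ordinary $p$-divisible groups), the inclusion $\bigoplus_i (\E_k \cap E_i) \hookrightarrow \E_k$ of $F$-crystal lattices in $E$ is an equality at every closed point of $C$, and is therefore an equality globally by \Cref{lem:crystalsagreeingatallclosedpoints}. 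For the isoclinic rigidity step: each $\E_k \cap E_i$ is an $F$-crystal lattice in the isoclinic $E_i$. After a Tate twist by $-i$, $E_i$ becomes a unit-root $F$-isocrystal, which is equivalent (via Frobenius-invariants of the fiber at any geometric point) to a continuous $\Q_p$-representation of $\pi_1^{\et}(C)$; lattices in such a crystal correspond to $\pi_1$-stable $\Z_p$-lattices in the representation, and are manifestly determined by their value at a single closed point.

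Combining the two steps: the hypothesis $\E_{1, x} = \E_{2, x}$ yields $(\E_1 \cap E_i)_x = (\E_2 \cap E_i)_x$ for each $i$, so $\E_1 \cap E_i = \E_2 \cap E_i$ globally by the isoclinic rigidity, and summing gives $\E_1 = \bigoplus_i (\E_1 \cap E_i) = \bigoplus_i (\E_2 \cap E_i) = \E_2$. The main obstacle is the pointwise decomposition step, which requires not only the ordinarity of the ambient isocrystal $E$ but also the ordinarity of the lattices $\E_k$ themselves at every closed point; this is the essential use of the ordinary hypothesis, and without it one can produce counterexamples at a point where a non-ordinary $F$-stable lattice in an ordinary isocrystal fails to decompose compatibly with the slope splitting.
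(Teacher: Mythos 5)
Your overall strategy is the same as the paper's: reduce to the isoclinic pieces of the slope filtration, use Crew's equivalence of unit-root $F$-crystals with $\Z_p$-local systems to get rigidity, use the Dieudonn\'e--Manin splitting over a perfect residue field, and glue with \Cref{lem:crystalsagreeingatallclosedpoints}.  However, there are two places where your argument deviates from the paper's proof in ways that create genuine gaps for the lemma as stated.

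First, you assert from the start a \emph{global} slope decomposition $E=\bigoplus_i E_i$ over $C$.  The paper only uses the slope \emph{filtration} $W^\bullet E$ over $C$ (this is what Katz's theorem, cited as \cite[Theorem 2.4.2]{katz_crystal}, provides: a filtration by sub-$F$-crystals that is locally split as sheaves, not split as $F$-(iso)crystals), and it invokes the splitting only pointwise over the perfect residue field, where Dieudonn\'e--Manin applies.  A global splitting of the slope filtration of a convergent $F$-isocrystal over a curve is not automatic, and your whole bookkeeping (the definition of $\E_k\cap E_i$ as a global subobject, the final ``summing'' step) depends on it.  The paper's route avoids this: it first proves $\Gr^\bullet_W\E_1=\Gr^\bullet_W\E_2$ globally via the unit-root/local-system rigidity, then shows $\E_{1,x}=\E_{2,x}$ at each closed point $x$ using the pointwise splitting, and only then applies \Cref{lem:crystalsagreeingatallclosedpoints}.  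Your use of \Cref{lem:crystalsagreeingatallclosedpoints} is instead to promote the pointwise decomposition $\E_{k,x}=\bigoplus_i(\E_k\cap E_i)_x$ to a global one --- which is fine once $E=\bigoplus_i E_i$ is known, but that premise is the thing that needs justification.

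Second, for the pointwise decomposition you require the lattices $\E_k$ themselves to be ordinary (Newton $=$ Hodge) at every closed point, and you excuse this by saying it is ``automatic in the intended application.''  The lemma as stated makes no such assumption, so this is a hypothesis you have added.  Moreover, the extra hypothesis is not needed: over a perfect field, \emph{any} $F$-stable lattice $\Lambda\subset E_x$ in an isocrystal with integral slopes splits compatibly with the canonical slope splitting of $E_x$, because extensions of $F$-crystals of distinct slopes over a perfect field vanish integrally --- the obstruction operator $N\mapsto N\phi_{C}-\phi_{A}N^\sigma$ (with $\phi_A$ a unit-root Frobenius and $\phi_C$ of positive slope) is bijective, since its ``eigenvalues'' $c_j-a_i$ are $p$-adic units.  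So the claimed ``counterexamples at a point where a non-ordinary $F$-stable lattice fails to decompose'' do not exist, and the paper's proof, which never imposes ordinarity on the lattices, is cleaner here.  To match the paper you should drop the global-splitting claim, work with the filtration throughout, and remove the ordinarity-of-lattices assumption.
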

\begin{proof}
    The ordinariness of $E$ implies that the slope filtration on $E$ induces a slope filtration on $\E_1,\E_2$. We will first prove that the associated graded crystals $\Gr^\bullet \E_1 = \Gr^\bullet \E_2$ agree inside $\Gr^\bullet E$. It suffices to show that $\Gr^{i}\E_1 (-i) = \Gr^i\E_2 (-i)$. Each of these crystals is a unit-root $F$-crystal and so is equivalent to a $\Z_p$-local system on $C$ \cite{crew85}. Therefore, the equality follows by the fact that they agree at a point. 

    We therefore have that $\Gr^\bullet \E_1$ and $\Gr^{\bullet} \E_2$ are the same. Let $x\in C$ be some closed point, and consider $\E_{1,x},\E_{2,x}\subset E_x$. The slope filtration admits a canonical splitting at the level of $F$-isocrystals, and therefore at the level of crystals. The equality of the $F$-crystals at $x$ follows from the equality of the graded crystals at $x$. As $x$ was arbitrary, the lemma follows from Lemma \ref{lem:crystalsagreeingatallclosedpoints}.

\end{proof}

\subsection{Mod $p$ Hodge filtrations and $F$-crystals}

Let $X/k$ be a smooth variety and let $\D/X$ be an $F$-crystal on $X$. We define a filtration by coherent subsheaves on the vector bundle $\D(X)$ as follows. Zariski-locally, $X,F$ admit lifts $\tilde{X},\tilde{F}$ to $W$. Replacing $X$ by one such open, the $F$-structure on $\D$ induces a map $\frob_{\tilde{F}}: \tilde{F}^*(\D(\tilde X)) \rightarrow \D(\tilde{X})$. Consider the coherent sub-module $M_i$ defined by sections $v\in \tilde{F}^*(\D(\tilde{X}))$ such that $p^i \mid \frob_{\tilde{X}}(v)$. Define $\Fil_{F}^i \subset F^*(\D(X))$ to equal the image of $M_i$ inside $F^*(\D(X))$ (note that $F^*(\D(X))$ is just $ \tilde{F}^*(\D(\tilde{X})) \bmod p$. Now, define $\Fil^i \subset \D(X)$ to equal $\Fil_F^i \cap (\D(X)\otimes 1) \subset F^*(\D(X))$. 
\begin{definition}\label{def: mod p hodge filtration in terms of F}
    Define the filtration $\Fil^i \subset \D(X)$ to be the Hodge filtration on $\D$ mod $p$.
\end{definition}
At each point $x$, $\Fil^i$ is just obtained by considering $\D_x$ as a free $W(k_x)$ module with a semilinear endomorphism $\frob_x$, and considering the image inside $\D_x \bmod p$ of the submodule consisting of vectors $v$ that satisfy $p^i \mid \frob_x(v)$. 

\begin{definition}\label{def: acceptable crystal}
    Notation as above. We say that $\D$ is acceptable if the filtration defined in Defition \ref{def: mod p hodge filtration in terms of F} is a filtration by sub-bundles. 
\end{definition}

Now, consider the special setting where $\D/X$ is obtained by starting with a smooth lift $\tilde{X}/W$ and the data of a Fontaine-Laffaille module $(\V,\nabla,\Fil^\bullet,\frob)$. Let $\cris\V/X$ denote the $F$-crystal on $X$ induced by this Fontaine-Laffaille module, and let $\mathcal{V} := {\cris\V}(X)$. Note that this is canonically identified with the vector bundle $\V \bmod p$ on $X$. Then, the Hodge filtration on $\cris\V$ mod $p$ agrees with the filtration $\Fil^\bullet$. Therefore, an F-crystal induced by a Fontaine-Laffaille module is always acceptable.

We now specialize to the setting of Shimura varieties. Let $C/k$ be a smooth curve, $\bar{C}$ its smooth compactification, and let $f: C \rightarrow \integralShimK$, and let $f^{\tor}: \bar C \rightarrow \integralShimK^{\tor}$ be the induced maps. 

\begin{definition}\label{def: Griffiths degree}
    Define the \textit{Griffiths degree} of $(C,f)$ to be the degree of the pullback of the Griffiths bundle of $\cris\V'$ to $\bar{C}$ by $f^{\tor}$. 

\end{definition}
\begin{remark}
    In the setup above, let $f^{\bb}$ denote the map $\bar{C}\rightarrow \cS^{\bb}$. Then the Griffiths degree of $(C,f)$ is of course just the degree of the Grifiths bundle on $\cS^{\bb}$ pulled back to $\bar{C}$ by $f^{\bb}$. 
\end{remark}

\begin{proposition}\label{prop: Griffiths Degree determined by F-crystal}
    The Griffiths degree of $(C,f)$ depends only on the isomorphism class of $f^* (\cris\V')$. In other words, suppose that $f_1,f_2: C\rightarrow \integralShimK$ are two maps such that $f_1^*(\cris\V') \simeq f_2^*(\cris\V')$. Then, the Griffiths degree of $(C,f_1)$ is the same as the Griffiths degree of $(C,f_2)$.
\end{proposition}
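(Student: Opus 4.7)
The plan is to show that the Griffiths bundle on $\bar C$ is intrinsic to the $F$-crystal $f^*(\cris\V')$ on $C$.  First I would extend everything to the compactification: the map $f:C\to \cS$ extends to $f^{\tor}:\bar C\to \ol{\cS}$, so pulling back the log $F$-crystal $\cris\V'$ from $\ol{\cS}$ yields a log $F$-crystal $f^{\tor,*}(\cris\V')$ on $\bar C$ that restricts to $f^*(\cris\V')$ on $C$.  By Lemma \ref{lem:crystaluniquelyextendstobdrypoints}, such a log extension is unique.  Hence any isomorphism $f_1^*(\cris\V')\simeq f_2^*(\cris\V')$ extends uniquely to an isomorphism of log $F$-crystals $f_1^{\tor,*}(\cris\V')\simeq f_2^{\tor,*}(\cris\V')$ on $\bar C$.

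Next I would argue that the filtered bundle underlying $f^{\tor,*}(\cris\V')$ is intrinsic to the log $F$-crystal.  Definition \ref{def: mod p hodge filtration in terms of F} constructs the mod-$p$ Hodge filtration purely from the local $F$-structure; this construction is local and adapts verbatim to the log-smooth setting.  Since $\cris\V'$ arises from the log Fontaine-Laffaille module $_{\FL}\V'$, the intrinsic mod-$p$ Hodge filtration coincides with the Hodge filtration of $_{\FL}\V'$ (equivalently, of $_{\dR}\cV'$), as noted just after Definition \ref{def: mod p hodge filtration in terms of F}; moreover $\cris\V'$ is acceptable, so this filtration is by sub-bundles and pulls back cleanly to a filtration by sub-bundles on $\bar C$.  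Combining this with the first step, $f_1^{\tor,*}(_{\dR}\cV')$ and $f_2^{\tor,*}(_{\dR}\cV')$ are isomorphic as filtered vector bundles on $\bar C$.

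Finally, since the Griffiths bundle is defined as $\otimes_i \det\Fil^i$ of the filtered bundle, isomorphic filtered bundles yield isomorphic Griffiths line bundles on $\bar C$, which in particular have equal degrees.  The only real subtlety is the mismatch between the natural home of the $F$-crystal (the open curve $C$) and the natural home of the Griffiths degree computation (the proper curve $\bar C$); this is resolved cleanly by the unique extension of log $F$-crystals provided by Lemma \ref{lem:crystaluniquelyextendstobdrypoints}, so I do not anticipate any serious obstacle beyond verifying that Definition \ref{def: mod p hodge filtration in terms of F} is genuinely local and hence behaves well under pullback and under passage to log-smooth lifts near the boundary.
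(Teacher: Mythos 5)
Your proposal is correct and follows essentially the same route as the paper: both rely on the unique log extension from Lemma~\ref{lem:crystaluniquelyextendstobdrypoints} together with the intrinsic characterization of the mod-$p$ Hodge filtration from Definition~\ref{def: mod p hodge filtration in terms of F} to conclude that an isomorphism of $F$-crystals carries one filtered bundle to the other. The only cosmetic difference is ordering: the paper establishes the filtration compatibility over the open curve $C$ and then passes to $\bar C$ via saturation, whereas you invoke the log-smooth version of Definition~\ref{def: mod p hodge filtration in terms of F} directly on $\bar C$ — a step you correctly flag as needing a sanity check but which causes no real trouble.
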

\begin{proof}
    By Lemma \ref{lem:crystaluniquelyextendstobdrypoints}, we have an isomorphism of log F-crystals on $C$ induced by the maps $f_1^{\tor}$ and $f_2^{\tor}$ (and this isomorphism extends the isomorphism $f_1^*{\cris\V'} \simeq f_2^*(\cris\V')$). Evaluating these on $C$, we have an isomorphism of bundles with log-connection $f_1^{\tor,*}(\dR\cV') \simeq f_2^{\tor,*}(\dR\cV')$. 

    We further have that the Hodge filtration on $f_i^*(\cris\V') \bmod p$ (as defined in Definition \ref{def: mod p hodge filtration in terms of F}) agrees with the filtration $f_i^*(\Fil^{\bullet}({\dR}\cV'))$, as $\Fil^\bullet({\dR}\cV')$ is the filtration induced by the Fontaine-Laffaille structure on $\cV$, which agrees with the mod $p$ Hodge filtration on $\cris\V'$. Therefore, the isomorphism $f_1^{\tor,*}(\dR\cV')|_C \simeq f_2^{\tor,*}(\dR\cV')|_C$, which is induced by an isomorphism of log $F$-crystals, sends $f_1^*(\Fil^\bullet ({\dR}\cV'))$ isomorphically to $f_1^*(\Fil^\bullet ({\dR}\cV'))$. It follows therefore that $f_1^{\tor,*}(\Fil^{\bullet} ({\dR}\cV'))$ maps isomorphically to $f_2^{\tor,*}(\Fil^{\bullet} ({\dR}\cV'))$, and therefore the Griffiths degrees are the same. 
\end{proof}

\section{Ordinary Locus of Shimura Varieties}

\subsection{The ordinary locus}

We first recall the notion of ordinariness.  Let $T\subset B\subset G$ be a maximal torus that splits over an \'etale extension of $\Z_p$ (that $G/\Z_p$ is reductive implies that these objects exist). Let $x \in \integralShimK(\F)$. Associated to the $F$-crystal $\cris\V'_x$ is the Frobenius element $b_x\in G(L)$, which is well defined up to $\sigma$-conjugation by $G(W)$. Let $\nu_x \in X_*(T)_\Q$ be the dominant element that is conjugate to $\nu_{b_x}$, the Newton cocharacter associated to $b_x$ (see work of Kottwitz \cite{Kott1} and \cite{Kott2} for definitions and more details). Note that the conjugacy class of $\nu_{b_x}$ depends only on the $\sigma$-conjugacy class of $b_x$, and so the definition of $\nu_x$ is independent of the choice of representative of the $\sigma$-conjugacy class of $b_x$. We note that $\nu_x$ is defined over $\Z_p$. Let $\mu_x\in X_*(T)$ denote the dominant co-character with $b_x \in G(W) \sigma(\mu_x)(p) G(W)$. We have that $\nu_x \preceq \mu_x$ (for example, see \cite{Gashi}), where two dominant fractional cocharacters satisfy $\mu' \preceq \mu''$ if and only if $\mu'' - \mu'$ can be written as a non-negative rational linear combination of positive co-roots. We have the following definition. 

  \begin{definition}\label{def:ord}
   The $F$-crystal $\cris\V'_x$ - and by extension the point $x$ -  is said to be ordinary if $\nu_x = \mu_x$.

  \end{definition}
\subsection{Filtrations on local systems}

\begin{lemma}\label{thm:etaleslopefiltration}
There exists a descending filtration $W^\bullet{_{\et}\V_p}$ on $_{\et}\V_p|_{\ShimK^{\ord}}$ such that for each $i$ the cyclotomic twist $(\Gr^i_W{_{\et}\V_p}|_{S^\ord})(-i)$ extends to $\integralShimK^{\ord}$, and the filtration is uniquely determined by this property.  Finally, we have that $ W^{\bullet}{_{\et} \V_p}$ is a $(P\subset G)$-filtration of $\V_p|_{S^{\ord}}$.

\end{lemma}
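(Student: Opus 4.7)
The plan is to construct $W^\bullet$ from the slope filtration on the integral $F$-crystal $\cris\V'$ over the ordinary locus, transport it to $_{\et}\V_p$ via the Fontaine--Laffaille correspondence, and recognize the cyclotomically twisted graded pieces as unit-root objects, which then extend canonically to $\integralShimK^{\ord}$.

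First I would invoke Katz's slope filtration theorem: on $\integralShimK^{\ord}$ the Newton polygon of $\cris\V'$ is constant by \Cref{def:ord}, so $\cris\V'$ admits a unique filtration $W^\bullet\cris\V'$ by sub-$F$-crystals with isoclinic graded pieces whose slopes are the weights of $\mu$ on $V'$. At any ordinary point the Newton cocharacter is $G(\Z_p)$-conjugate to $\mu$, so fiberwise this filtration is exactly the $\mu$-weight filtration; it is therefore a $(P\subset G)$-filtration, where $P$ is the parabolic attached to $\mu$. The Fontaine--Laffaille correspondence of \Cref{cond:integralmodel} is an exact tensor equivalence between $\cris\V'|_{\integralShimK^{\ord}_{\cO_{E_v}}}$ and $_{\et}\V'_p|_{S^{\ord}}$, so this filtration descends to a $(P\subset G)$-filtration of $_{\et}\V'_p|_{S^{\ord}}$; undoing the uniform Tate twist by $m$ yields the claimed filtration $W^\bullet{_{\et}\V_p}$ on $_{\et}\V_p|_{S^{\ord}}$.

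To verify the extension property, note that $(\Gr^i_W{_{\et}\V_p})(-i)$ corresponds, under Fontaine--Laffaille together with the cyclotomic twist, to a slope-zero (i.e.\ unit-root) subquotient of $\cris\V'$ on the ordinary locus. By Crew's theorem \cite{crew85}, unit-root $F$-crystals on the special fiber are equivalent to lisse $\Q_p$-sheaves there, which then extend uniquely along the $p$-adic formal scheme $\integralShimK^{\ord}$ by topological invariance of the étale site. For uniqueness, any competitor $W'^\bullet$ satisfying the extension property yields, via Fontaine--Laffaille, a filtration of $\cris\V'$ whose shifted graded pieces are unit-root, so the slope decomposition of each $\Gr^i_{W'}\cris\V'$ is forced to be isoclinic of the correct slope; by uniqueness of the slope filtration on an $F$-isocrystal, $W'^\bullet=W^\bullet$.

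The hard part is bookkeeping among three avatars of the same filtered object: the slope filtration on the integral $F$-crystal over $\integralShimK^{\ord}_{\cO_{E_v}}$, the resulting étale filtration on the generic fiber $S^{\ord}$, and the unit-root subquotients on the special fiber that govern extendability. Fontaine--Laffaille and Crew's theorem let us move freely among these, but one must check that each passage respects the $G$-structure on every $G$-representation (not merely on $V'$), so that the final filtration $W^\bullet{_{\et}\V_p}$ is genuinely a reduction of structure group from $G$ to $P$.
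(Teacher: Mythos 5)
Your broad outline matches the paper's: Katz's slope filtration on $\cris\V'$ over the ordinary locus, transport through Fontaine--Laffaille, recognition of the twisted graded pieces as unit-root, and extension via Crew. But there is a genuine gap at the crucial passage from the $F$-crystal filtration to a filtration on the \'etale local system. You write that the Fontaine--Laffaille correspondence is an exact tensor equivalence ``between $\cris\V'$ and $_{\et}\V'_p$,'' and that the slope filtration therefore descends; but the correspondence in \Cref{cond:integralmodel} is between the \emph{Fontaine--Laffaille module} $_{\FL}\V'$ (the $F$-crystal together with its Hodge filtration and Griffiths-transversality data) and the crystalline local system, not between the bare $F$-crystal $\cris\V'$ and $_{\et}\V'_p$. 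Katz's theorem produces a filtration of $\cris\V'$ by sub-$F$-\emph{crystals}; to move through Fontaine--Laffaille you must first prove these are sub-\emph{Fontaine--Laffaille modules}, i.e.\ that the slope filtration is compatible with $\Fil^\bullet$ in the right way. This is nontrivial and is exactly where the paper does work: it proves the splitting $W^0\M\cap \Fil^1\M = 0$ by reducing mod $p$ (where Frobenius is invertible on $W^0\M_X$ and zero on $\Fil^1\M_X$), then bootstraps inductively via the twisted quotient $(\M/W^0\M)(1)$. Without this step, the transport of structure to $_{\et}\V'_p$ is not justified.

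The $(P\subset G)$-structure claim is also asserted too quickly. You argue that at an ordinary point the Newton cocharacter is $G(\Z_p)$-conjugate to $\mu$, so the filtration ``is therefore'' a $(P\subset G)$-filtration. Even granting the fiberwise statement, getting a $P(\Z_p)$-reduction of the $G(\Z_p)$-torsor (compatible with the $G(\Z_p)$-trivializing section of Definition \ref{def:G-structures}) is not automatic from a fiberwise rational statement. The paper first establishes a $P(\Q_p)$-structure on $_{\et}V_p$ over $S^{\ord}$ via canonical lifts and Fontaine's equivalence for crystalline representations (using the explicit form $\frob_x=\mu(p)\sigma$ at canonical lifts), deduces that the monodromy of $W^\bullet$ factors through $P(\Q_p)$, and then reconciles the $G(\Z_p)$-orbit and $P(\Q_p)$-orbit of trivializations using the Iwasawa decomposition $G(\Z_p)\cdot P(\Q_p)=G(\Q_p)$. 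You flag the $G$-structure compatibility as ``the hard part'' but supply no argument; that is precisely where the substance lies.

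Your uniqueness and extension arguments are essentially correct and in the same spirit as the paper (Crew's theorem and invariance of the \'etale site in place of the paper's direct appeal to weight-$0$ Fontaine--Laffaille modules extending to local systems), modulo the FL-compatibility issue above, which those arguments also tacitly use.
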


\begin{proof}
For an $F$-crystal $\M$ over a point whose Hodge and Newton polygons are equal with integer slopes, there is a unique filtration (in fact splitting) by $F$-crystals $W^\bullet\M\subset \M$ with the property that $\Gr^i_W\M$ is isoclinic of slope $i$ \cite[Theorem 1.6.1]{katz_crystal}.  These filtrations behave well in families in the following sense.  Let $X$ be a smooth scheme in characteristic $p$ and $\M$ an $F$-crystal whose Hodge and Newton polygons are pointwise constant, equal, and with integer slopes.  Then by a theorem of Katz \cite[Theorem 2.4.2]{katz_crystal}, there is a filtration by sub-$F$-crystals $W^\bullet \M$ whose underlying filtration of coherent sheaves (on a choice of local lifting) is locally split, whose graded subquotients $\Gr^i_W\M$ are isoclinic of slope $i$, and which is compatible with pullback (in particular, with restriction to a point).

If $\M$ underlies a Fontaine--Laffaille module (on a $p$-adic formal scheme $\cX$ with special fiber $X$), we claim this filtration is by sub-Fontaine--Laffaille modules. We claim $W^0\M\cap \Fil^1\M=0$. Indeed, since both sub-bundle are saturated, it is enough to check the statement after reducing mod $p$. But on $X$ itself, the Frobenius operator is invertible on $W^0\M_X$ and vanishes on $\Fil^1\M_X$, proving the claim.  Now the claim is true for the twisted quotient Fontaine-Laffaille module $(\M/W^0\M)(1)$ and therefore also for $\M/W^0\M$ .  Finally, the $W^i\M$ are the full pre-images of $W^i(\M/W^0\M)$, completing the proof.

To finish showing the existence of this filtration, we first work with $_{\et}\V_p':=_{\et}\V_p(m)|_{\ShimK^{\ord}}$  By the above, the $F$-crystal $ {\cris\V'}$ is filtered by $F$-crystals underlying Fontaine--Laffaille modules, which therefore corresponds to a filtration by local systems.  Moreover, since every Fontaine-Laffaille module of weight $0$ extends to a local system over $\integralShimK^{\ord}$, the graded subquotients extend up to a cyclotomic twist. Twisting down, we obtain a filtration $W^\bullet:=W^\bullet {_{\et}\V_p}$ of $_{\et}\V_p$.

We now need to prove that $W^\bullet \subset {_\et\V_p}$ is a $P$-filtration. We will first show this at the level of $\Q_p$ local systems. It suffices to work with a fixed connected component $\cS_0$ of $\cS$. Let $x\in \cS_{0,k}^{\ord}(\bar k)$ be a point and let $\tilde{x}$ be its canonical lift. By \cite{Wortman} (also see \cite[Section 7,8]{BST}) we may choose coordinates on $\cris V_x$ so that $\frob_x = \mu(p)\sigma$ where $\mu$ induces the Hodge filtration on $\cris V_x$ that defines $\tilde{x}$. It follows that the filtered isocrystal associated to $\tilde{x}$ has $P$-structure. Note that weight-spaces of $\mu$ on $\cris V_x$ provide a splitting to the slope filtration, and so the $P$-structure is compatible with the slope filtration. By applying Fontaine's equivalence of categories between crystalline representations and weakly admissible filtered $F$-isocrystals, we obtain a $P$-structure on $_{\et}V_{p,\tilde{x}}$. As Fontaine's equivalence is compatible with the Fontaine-Laffaille correspondence, we see that the filtration induced by the $P$-structure on $_{\et}V_{p,\tilde{x}}$ is the same as $W^{\bullet}_{\tilde{x}}$. As $W^{\bullet}$ is a filtration by sub-local systems, it follows that the monodromy representation is given by a map $\pi_1(S_0^{\ord},\tilde{x}) \rightarrow P(\Q_p)$. It follows that $W^{\bullet}\subset {_{\et}V_{p}}$ is a $P(\Q_p)$-filtration.

In order to conclude, observe that the $G(\Z_p)$-structure on $_{\et}\V_p$ (resp. $P(\Q_p)$-structure on $_{\et}V_p$) gives a $G(\Z_p)$-orbit (resp. $P(\Q_p)$-orbit) inside the set of $G(\Q_p)$-trivializations of $_{\et}V_p$. It suffices to prove that these two orbits intersect. The set of $G(\Q_p)$-trivializations of $_{\et}V_p$ is a principal homogenous space for $G(\Q_p)$, and so the question reduces to proving that a $G(\Z_p)$-coset and $P(\Q_p)$-coset must intersect. This  follows from the existence of the Iwasawa decomposition which states $G(\Z_p)\cdot P(\Q_p) = G(\Q_p)$.   



\end{proof}

\begin{definition}
    Define $\Gr_W^{\et}(_\et\V_p) :=\bigoplus_i\Gr_W^i( _{\et} \V_p|_{S^{\ord}})(-i)$. We have that $\Gr_W^{\et}(_{\et}\V_p)$ extends naturally to a local system on $\cS^{\ord}$ with $L(\Z_p)$-structure, where $L$ is the reductive quotient of $P$, and we consider it as such.
\end{definition}

More generally, for any ordinary $F$-(iso)crystal $\M$ on a scheme $X$, \cite[Theorem 2.4.2]{katz_crystal} gives a filtration by sub-$F$-crystals $T^\bullet \M$ whose underlying filtration of coherent sheaves (on a choice of local lifting) is locally split, whose graded subquotients $\Gr^i_T\M$ are isoclinic of slope $i$, and thus by \cite[Thm 2.1]{crew85},  $\Gr^i_T\M(-i)$ corresponds to an \'etale local system.

\begin{definition}
For an ordinary $F$-(iso)crystal $\M$ on $X$, we define $\Gr_W^{\et}(\M) :=\bigoplus_i\Gr^i_T\M(-i)$ thought of as an \'etale local system. 
\end{definition}

Note that $\Gr_W^{\et} (M(-1))\cong \Gr_W^{\et} M$.

\newcommand{\ordsection}{\lambda}
\subsection{Hecke operators over the ordinary locus}
Putting \Cref{thm:etaleslopefiltration} and \Cref{lem:lift} together we obtain:
\begin{corollary}The natural morphism $S_{P}\to \ShimK$ admits a canonical section $\ordsection: \ShimK^\ord\to S_{P}$ over $\ShimK^\ord$.
\end{corollary}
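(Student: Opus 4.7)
The plan is to construct $\lambda$ by feeding the canonical $P$-filtration on $_{\et}\V_p|_{S^{\ord}}$ produced by \Cref{thm:etaleslopefiltration} into the equivalence of data spelled out in \Cref{lem:lift}(1). First I would take $\phi : S^{\ord} \hookrightarrow S$ to be the inclusion and set $M := \phi^* {_{\et}\V_p}$, which inherits its $G(\Z_p)$-structure from $_{\et}\V_p$. By \Cref{thm:etaleslopefiltration}, there is a canonical descending filtration $W^\bullet M$ which is a $(P \subset G)$-filtration of $\V_p|_{S^{\ord}}$; unraveling \Cref{def:G-structures}, this is precisely the same data as a $(P(\Z_p), (\V_p, W^\bullet\V_p))$-structure on $(M, W^\bullet M)$ compatible with the ambient $G(\Z_p)$-structure on $M$.

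This is exactly the input required by \Cref{lem:lift}(1), which in turn produces the desired lift $\lambda : S^{\ord} \to S_P$ of the inclusion $\phi$ -- that is, a section of $\pi_P$ over $S^{\ord}$. Canonicity of $\lambda$ is automatic because both inputs are canonical: the filtration $W^\bullet$ is uniquely characterized by the extension property in \Cref{thm:etaleslopefiltration}, and the correspondence in \Cref{lem:lift}(1) is a bijection.

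There is no real obstacle in this argument; the corollary is essentially a translation exercise, with all the substance already absorbed into the two cited results. The only minor subtlety worth flagging is that \Cref{lem:lift} is phrased for $E_v$-varieties (with an analogous rigid analytic version), so one should interpret $S^{\ord}$ accordingly -- either as an open subvariety of $S_{E_v}$ whose characteristic-$p$ specialization is the ordinary locus, or as the rigid analytic tube over $\mathscr{S}^{\ord}$ -- but in either case \Cref{lem:lift}(1) applies verbatim and yields $\lambda$.
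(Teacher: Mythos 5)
Your argument matches the paper exactly: the corollary is stated immediately after the line ``Putting \Cref{thm:etaleslopefiltration} and \Cref{lem:lift} together we obtain,'' so the intended proof is precisely to feed the canonical $P(\Z_p)$-filtration from \Cref{thm:etaleslopefiltration} into \Cref{lem:lift}(1), as you do. Your closing remark about interpreting $S^{\ord}$ via the rigid-analytic version of \Cref{lem:lift} is a reasonable clarification of a point the paper leaves implicit.
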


It follows that for any Hecke morphism $\pi:S_{P}\to S$ we obtain a morphism $\pi(\ordsection):=\pi\circ \ordsection:\ShimK^\ord\to \ShimK^\ord$.  In particular, this applies to the Hecke correspondence $\pi_{\mu(p)}:S_{P}\to \ShimK$ induced by $\mu(p)$ (see \Cref{lem:define mu(p)}), and we define $\tau:=\pi_{\mu(p)}(\ordsection):\ShimK^\ord\to \ShimK^\ord$.

\begin{proposition}\label{prop: canonical branch} The morphism $\tau:\ShimK^\ord\to \ShimK^\ord$ extends to an integral morphism $\tau:\integralShimK^{\ord} \rightarrow \integralShimK^{\ord}$.  Moreover, $\tau$ reduces mod $p$ to Frobenius.  
\end{proposition}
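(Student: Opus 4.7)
The plan is to combine the smooth extension property of the integral canonical model with an explicit identification of $\tau$ at canonical lifts of ordinary points. Since $\mathscr{S}$ is smooth over $\mathcal{O}_{E_v}$ by Conditions~\ref{cond:integralmodel}, the ordinary locus $\mathscr{S}^{\ord}$ is an open smooth $\mathcal{O}_{E_v}$-subscheme. The defining N\'eron-type extension property of the integral canonical model then guarantees that any morphism from the generic fiber of a smooth $\mathcal{O}_{E_v}$-scheme to $S$ extends uniquely to $\mathscr{S}$; applying this to $\tau:S^{\ord}\to S$ produces a canonical extension $\tilde\tau:\mathscr{S}^{\ord}\to\mathscr{S}$. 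It then remains to show that $\tilde\tau$ lands in $\mathscr{S}^{\ord}$ and that its reduction is Frobenius.

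To do this, I would study $\tilde\tau$ on canonical lifts. For each $y\in\mathscr{S}^{\ord}(\bar{\F}_p)$ there is a canonical lift $\tilde y\in\mathscr{S}^{\ord}(W(\bar{\F}_p))$ characterized by the property that the Hodge filtration on ${_\dR}\cV'_{\tilde y}$ is the one induced by the splitting of the slope filtration of $\cris\V'_y$ (this is the interpretation already used in the proof of Theorem~\ref{thm:etaleslopefiltration}, via Fontaine's equivalence, where the Frobenius was written in coordinates as $\mu(p)\sigma$). Under this identification $\lambda(\tilde y)$ is just $\tilde y$ equipped with its slope filtration on ${_{\et}\V_{p,\tilde y}}$, and the Hecke twist $\pi_{\mu(p)}$ then replaces the integral structure $\V_p$ by $\mu(p)\V_p$, which rescales the weight-$i$ graded piece by $p^i$. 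At the level of ordinary $F$-crystals this is precisely the modification that implements $\frob$: on the slope-$i$ piece Frobenius acts as $p^i$ times a unit, so passing to $\mu(p)\V_p$ identifies the resulting $F$-crystal with $\frob^*\cris\V'_y$. Hence $\tilde\tau(\tilde y)$ is the canonical lift of $F(y)$, and in particular $\tilde\tau$ reduces at $y$ to $F(y)$.

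Since the reductions of canonical lifts cover all of $\mathscr{S}^{\ord}(\bar{\F}_p)$, the reduction $\tilde\tau\bmod p$ agrees with the absolute Frobenius on every closed point of $\mathscr{S}^{\ord}_{\F_p}$, hence $\tilde\tau\bmod p=F$ as morphisms of reduced schemes. In particular the image of $\tilde\tau$ lies in $\mathscr{S}^{\ord}$ (because the generic fiber lands in $S^{\ord}$ and the special fiber lands in $\mathscr{S}^{\ord}_{\F_p}$ since $F$ preserves the ordinary locus), giving $\tilde\tau:\mathscr{S}^{\ord}\to\mathscr{S}^{\ord}$ as desired.

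The main obstacle will be the second step: rigorously matching the $\mu(p)$-Hecke translate of the canonical lift with the Frobenius lift. This involves carefully unwinding the Serre--Tate-style deformation theory of ordinary $F$-crystals with $G$-structure in the Shimura setting, tracking conventions so that the $\mu(p)$-rescaling on the graded pieces of the slope filtration matches the slope decomposition of $\frob$, and checking that the identification of ${_{\et}\V_{p,\tilde y}}$ with its slope-filtered, $P(\Z_p)$-structured counterpart is compatible with the Hecke description of $\pi_{\mu(p)}$ given in Lemma~\ref{lem:heckelocsyspullback}.
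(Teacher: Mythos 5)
Your extension step rests on a ``N\'eron-type extension property'' of the integral canonical model, and this is where the argument breaks. First, the morphism $\tau\colon S^{\ord}\to S^{\ord}$ is not a map out of the generic fiber of a smooth $\mathcal{O}_{E_v}$-scheme: the ordinary locus is defined by a condition on the special fiber, so $\mathscr{S}^{\ord}$ is naturally a formal $p$-adic scheme and $S^{\ord}$ is its rigid generic fiber, which is a proper open subspace of the analytification of $S_{E_v}$. The N\'eron extension property, even where it holds, is formulated for algebraic morphisms from smooth $\mathcal{O}_{E_v}$-schemes, not for rigid-analytic morphisms between tubes, so it cannot be invoked directly here. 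Second, the extension property is \emph{not} among the listed Conditions~\ref{cond:integralmodel}, and it is a genuinely delicate property to establish for non-abelian-type Shimura varieties; the paper is deliberately avoiding any reliance on it. The paper's actual route is to prove that the mod~$p$ ordinary locus is quasi-affine (using the isomorphism $F_{\cS_k/k}^*L\cong L$ for the Griffiths bundle coming from the conjugate filtration) and then to apply a Raynaud-plus-rigidity-lemma argument to extend a rigid morphism with quasi-affine target special fiber to the formal models. This is a fundamentally different, and correct, mechanism.

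For the second step, your Serre--Tate-style identification of the $\mu(p)$-Hecke translate of a canonical lift with the canonical lift of the Frobenius image is the right intuition, and in a polished form it could plausibly replace the paper's argument. But as written it is a heuristic: you yourself flag ``rigorously matching the $\mu(p)$-Hecke translate of the canonical lift with the Frobenius lift'' as the main obstacle, and that \emph{is} the proposition. The paper takes a different route: it restricts to a Zariski-dense set of ordinary points whose canonical lifts are CM points $(T,h)$ with $E_{T,\fp}=\Q_p$ (which requires the Chebotarev/unramified-torus analysis plus the companions/parabolicity inputs), and then applies the Shimura reciprocity law (Lemma~\ref{lem: reciprocity special points}) at those points to identify $\pi_{h(p)}$ with Frobenius mod $\fp$. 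This pins the identification to existing and sharply stated literature rather than to an unwinding of Serre--Tate coordinates, which is why the paper prefers it. If you want to pursue your line, the content you would need to supply is precisely a Shimura-theoretic version of ``Frobenius acts by $p^i$ on the slope-$i$ graded piece'' at the level of the $G(\Z_p)$-torsor of trivializations, compatibly with Lemma~\ref{lem:heckelocsyspullback}; this is nontrivial and currently absent.
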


\begin{proof}
The first statement is a consequence of the following two lemmas:

    \begin{lemma}\label{lemma:quasiaffine}
        $\cS_{k}^{\ord}$ is quasi-affine.
    \end{lemma}
    \begin{proof}  First observe that if $X/k$ is a scheme over a characteristic $p$ field, then any line bundle $L$ with a nonzero section $s$ and such that $F_{X/k}^*L\cong L$ where $F_{X/k}:X\to X$ is the relative Frobenius is necessarily torsion.  Indeed, $s^p/F_{X/k}^*s$ is a nowhere zero section of $L^{p-1}$.  As quasi-affineness is equivalent to $\cO_X$ being ample, it follows that $X$ is quasi-affine if and only if it admits an ample line bundle $L$ for which $F_{X/k}^*L\cong L$.

    To finish, recall that we have a mod $p$ Fontaine--Laffaille module $(M,\nabla,\Fil^\bullet M)$ on $\cS_{k}$, and that the conjugate filtration $C^\bullet M$ is an increasing filtration of $(M,\nabla)$ by flat locally split subsheaves satisfying:  (1) $\gr^C_i M\cong F_{\cS_{k}/k}^*\gr^i_\Fil M$ and (2) over $\cS^{\ord}_{k}$, each $C_iM$ is a complement to $\Fil^{i+1}M$.  It follows that, over $\cS^{\ord}_{k}$, we have $F_{\cS_{k}/k}^*(\det \Fil^{i}M)\cong \det \Fil^{i}M$, and therefore also $F_{\cS_{k}/k}^*L\cong L$ where $L=\otimes_i \det \Fil^iM $ is the Griffiths bundle.  As $L$ is ample, the conclusion follows from the previous paragraph. 
    \end{proof}
    
\begin{lemma}\label{lem:quasiaffineextension}
Let $\mathscr{X},\mathscr{Y}/\cO_{E_v}$ be two formal $p$-adic schemes.  Assume $\mathscr{X}$ is integral and normal and that the special fiber $\mathscr{Y}_k$ of $\mathscr{Y}$ is quasi-affine.  Then any morphism $f:\mathscr{X}^{\mathrm{rig}}\to\mathscr{Y}^{\mathrm{rig}}$ of rigid generic fibers extends to a morphism $\bar f:\mathscr{X}\to\mathscr{Y}$ of formal models.
\end{lemma}
\begin{proof}
 According to Raynaud \cite[\S8.4,Thm 3]{bosch2005}, the rigid analytic morphism $f:\mathscr{X}^{\mathrm{rig}}\to\mathscr{Y}^{\mathrm{rig}}$ extends to a morphism of formal models $\tilde f:\mathscr{X}'\to\mathscr{Y}$ for some admissible blow-up $g:\mathscr{X}'\to \mathscr{X}$.  The morphism $g$ is proper, and therefore by the quasi-affineness of $\mathscr{Y}_k$ the fibers of $g$ are contracted under $\tilde f$.  By the rigidity lemma \cite[Lemma 1.15]{debarre} (see also \cite[Lemma 2.17]{BST} - the proof is the same in the category of formal schemes), the morphism $\tilde f$ factors through a morphism $\bar f:\mathscr{X}\to \mathscr{Y}$. 
\end{proof}

For the second claim, it suffices to prove that $\tau \bmod p$ agrees with Frobenius for a dense set of points. By \cite[Thm 8.2]{BST}, the ordinary locus is open dense and that every ordinary $x\in S(\Fpbar)$ admits a canonical lift. We will do this for a sufficiently large class of ordinary points -- to that end, let $(T,h)$ be a 0-dimensional Shimura variety, with $h$ defined over a reflex field $E_T$. Let $\fp$ be a height-1 prime of $E_T$. Let $S:=S_{T(\hat{\Z})}(T,h)$, equipped with its integral canonical model $\cS$ at $E_{\fp}$. We have the lemma below.
\begin{lemma}\label{lem: reciprocity special points}
Consider the Hecke morphism $\pi_{h(p)}:\cS\to \cS$ induced by $h(p)$.  Then $\pi_{h(p)}$ reduces to Frobenius mod $\fp$.

\end{lemma}

\begin{proof}
We use the Reciprocity law. The map $h$ induces a map $\hat{h}: \A^{\times}_{E,f}\to T(\A_{E,f})$. Composing this with the trace map from $E$ to $\Q$ gives a map $r:\A^{\times}_{E,f}\to T(\A_f)$.

Let $s\in \A^{\times}_{E,f}$ be such that $s_{\fp}=p$ and $s$ has component $1$ at all other places. Let $\phi=\mathrm{art}_E(s)$. Finally, take $x\in \cS(\cO_{E,\fp}^{\mathrm{un}})$. By \cite[(62)]{MilneIntro} it follows that
$\pi_{h(p)}(x)=\phi(x)$. Reducing modulo $\fp$ yields the result, since $\phi$ reduces to Frobenius.

\end{proof}

Let $x\in \integralShimK^{\ord}(\Fpbar)$. The point $x$ has a canonical lift $\tilde{x}$, which by \cite[Thm 8.7]{BST} is special. This gives us a zero-dimensional Shimura datum $(T,h)$, an embedding $(T,h)\rightarrow (G,X)$, and a choice of prime $\fp $ of the reflex field $E_T$ of $(T,h)$ that lies over our chosen prime of the reflex field of $(G,X)$. It suffices to prove that the set of points $x$ such that $\fp$ is a height-1 prime of $E_T$ is dense in $\cS_k^{\ord}(\Fpbar)$. We will do this in two steps. Given a point $x\in \cS^{\ord}(\Fpbar)$, we let $\cris\frob_x$ denote the crystalline Frobenius on $\cris\V_x$, and we let $_{\et}\frob_{x}$ to be the endomorphism of $\Gr^\et_WV_{p,\bar{x}}$ induced by the $q$-Frobenius in $\textrm{Gal}(\Fpbar/\F_q)$, where $\F_q$ is the field of definition of $x$. We may consider $_{\et}\frob_{x}$ as an element of $L(\Z_p)$ upto conjugacy. We will first show that there is a Zariski-dense set of points $x\in \cS_k^{\ord}(\Fpbar)$, such that the centralizer of $_{\et}\frob_x$ in $L$ is an unramified torus. We will then prove that for such points $x$, the associated Shimura data $(T,h)$ and the prime $\fp$ of the reflex field $E_T$ that induces the canonical lift of $x$ is height 1. 



We now carry out the first step. First note that the $\ell$-adic local systems and the $F$-isocrystals are companions by \cite[Thm 1.1]{patrikis2025}. As every open subvariety of $\integralShimK^{\ord} \mod p$ contains a curve with big monodromy, it suffices to prove that such a curve that also intersects the ordinary locus has infinitely many such points. Let $C \subset \integralShimK^{\ord}$ be such a curve. As $\cris V$ and $_{\et}V_\ell$ are companions, we have that $\cris V|_C$ has overconvergent monodromy $G$, and by the Parabolicity conjecture proved by \cite{Marco}, we have that $\cris V|_C$ has convergent monodromy $P\subset G$. Finally, consider the unit-root crystal $\Gr^{\bullet}_{\et}(\cris\V)$ - this is now has monodromy contained in $L$, but which contains $L^{\der}$. The monodromy $L_C$ of the actual \'etale local system asociated to this unit root $F$-crystal will therefore contain an open subgroup of $L^{\der}(\Z_p)$. There is an open (in the $p$-adic topology) subset of $L_C$ that consists of elements whose centralizer in $L$ is an unramified quasi-split\footnote{Recall that we define maximal torus in a reductive group to be quasi-split if it contains a maximally split torus.} maximal torus. Indeed, consider an unramified torus $Y\subset  L^{\der}$, and set $Y_r\subset Y(\Z_p)$ to be the set of regular elements. Then, the set $L^{\der}_{C,r}$ of elements in $Y_C \cap L^{\der}(\Z_p)$ conjugate to some element of $Y_r$ is open in $L_C \cap L^{\der}(\Z_p)$ (see for eg \cite[Claim 2.18]{lamshankar2025}). The open subset we require is just $L_{c,r} = L^{\der}_{C,r} \cdot Z(L_C)$, where $Z(L_C) \subset L_C$ is defined to be the intersection of the center of $L$ with $L_C$. Applying the Chebotarev density theorem now shows that there are infinitely many points $x\in C$ such that the (conjugacy class) of $\varphi_x$ is contained in $L_{C,r}$. 

We will now show that for such a point $x$ and associated Shimura datum $(T,h)/E_T$, the embedding of Shimura data is induced by a height-1 prime of $E_T$. By solely working in the setting of $S(T,h)$ and the prime $\fp \subset \cO_{E_T}$, we have that $_{\et}\frob_x$ is induced by an element of $T(\Q_p)$. On the other hand, by treating $x$ as a point of $\integralShimK$, we have that the centralizer in $L$ of any element in the conjugacy class defined by $_{\et}\frob_x$ is an unramified torus. Therefore, we have that $T$ is contained in an unramified torus of $L^{\der}$ up to conjugacy, and therefore itself is unramified. 

Embedding $E_T$ in $\overline{\Q}_p$ via $\fp$, we have the co-character $h: \mathbb{G}_{m,{E_{T,\fp}}} \rightarrow T_{E_{T,\fp}}$ defined over $E_{T,\fp}$. The fact that $T$ is unramified implies that $E_{T,\fp}$ is an unramified extension of $\Q_p$. 

By \cite[Thm 8.4]{BST} and the fact that $x$ is an ordinary point of $\cS$, the Hodge and slope filtrations agree under the identification ${\cris}V_x \simeq _{\dR}V_{\tilde{x}}$. We now think of $\cris V$ as an $F$-isocrystal with $T$-structure. Since $T$ is commutative, we have a canonical action of $T$ on $\cris V$.
By \cite{Kott1}, the Newton co-character $\nu: \Gm \rightarrow T$ is defined over $\Q_p$, and induces the slope filtration. Since $h$ induces the Hodge filtration, it follows that $h$ is defined over $\Q_p$, whence $E_{T_{\fp}} = \Q_p$ as required.

\end{proof}

\begin{remark}
    Ayan Nath, Keerthi Madapusi's student, in ongoing work also proves that Frobenius on $\cS^{\ord}_k$ admits a lift to the ordinary locus using work of Gardner-Madapusi (\cite{GardnerMadapusi}).
\end{remark}

\subsection{Integral lifting lemma}
We will need an integral version of \Cref{lem:lift}, but first we will need to know the \'etale cover $\pi_U:S_U\to S_P$ extends integrally, at least over the image of the canonical section $\ordsection$.  For any finite cover $\pi:S_{K'}\to S_P$ let $S^\ordsection_{K'}:=S^\ord\times_{S_P}S_{K'}$ be the pullback along $\ordsection$.  Note that $S^\ord\cong S^\ordsection_P$ is just the image of the section, and therefore has an integral model $\mathscr{S}^\ord=:\mathscr{S}^\ordsection_P$.

\begin{lemma}\label{lem:intstructurecover}
    \begin{enumerate}
        \item For any semi-positive $g$, the natural morphism $\pi:S^\ordsection_{P(\Z_p)K^p\cap gKg^{-1}}\to S^\ordsection_P $ extends naturally to a finite \'etale cover $\mathscr{S}^\ordsection_{P(\Z_p)K^p\cap gKg^{-1}}\to \mathscr{S}^\ordsection_P$. 
        \item The morphism $\pi_g:S^\ordsection_{P(\Z_p)K^p\cap gKg^{-1}}\to S^{\ord}$ extends integrally to $\pi_g:\mathscr{S}^\ordsection_{P(\Z_p)K^p\cap gKg^{-1}}\to \mathscr{S}^{\ord}$.
        \item We obtain a natural isomorphism $\pi^*{_{\et}V_{p}}\to \pi_g^* {_{\et}V_{p}}$, such that $(\pi^*_{\et}V_{p},{\pi_g^*} _{\et}V_{p})$ has a natural $\big(P(\Z_p)\cap gK_pgK^{-1}, \V_p,g\V_p\big)$ structure.
        \end{enumerate}
\end{lemma}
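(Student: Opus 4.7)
The plan is to describe the cover $\pi:S^\ordsection_{P(\Z_p)K^p\cap gKg^{-1}}\to S^\ordsection_P$ in terms of graded-lattice data via Lemma~\ref{lem:lift}(ii) and to observe that this description makes sense integrally on $\mathscr{S}^{\ord}$ thanks to Theorem~\ref{thm:etaleslopefiltration}. The three parts then correspond to constructing the cover, extending the Hecke morphism, and reading off the structure on the pair of pullbacks.

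For Part (1), by Lemma~\ref{lem:lift}(ii) the cover $\pi$ parametrizes, on the generic fiber, a graded lattice in the $L(\Q_p)$-local system $\Gr_W(_\et V_p)$ together with a compatible $(L(\Z_p),(\Gr_W\V_p,\Gr_W g\V_p))$-structure refining the $P(\Z_p)$-structure provided by the section $\ordsection$. By Theorem~\ref{thm:etaleslopefiltration} the object $\Gr_W^\et(_\et\V_p)$ with its $L(\Z_p)$-structure extends to $\mathscr{S}^{\ord}$, so I would \emph{define} $\mathscr{S}^\ordsection_{P(\Z_p)K^p\cap gKg^{-1}}$ as the finite \'etale cover $\Lambda\backslash\sIsom\big(\Gr_W^\et(_\et\V_p),\ul{\Gr_W\V_p}\big)$, where $\Lambda\subset L(\Z_p)$ is the stabilizer of $\Gr_W g\V_p$. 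By construction this is finite \'etale over $\mathscr{S}^{\ord}$ and recovers $S^\ordsection_{P(\Z_p)K^p\cap gKg^{-1}}$ on the generic fiber.

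For Part (2), on the generic fiber the Hecke map $\pi_g$ sends the graded-lattice data above to the new $G(\Z_p)$-structure $g\V_p$ on $_\et V_p$; semi-positivity of $g$ together with Lemma~\ref{lem:semipositivityproperty}(iii) ensures that $g\V_p$ is uniquely reconstructed from its graded and the filtered lattice coming from $\ordsection$, so the generic fiber map is well-defined. To extend integrally, I would take $p$-adic formal completions and invoke Lemma~\ref{lem:quasiaffineextension}: the source is integral and normal since it is finite \'etale over the normal scheme $\mathscr{S}^{\ord}$, and the target $\mathscr{S}^{\ord}_k$ is quasi-affine by Lemma~\ref{lemma:quasiaffine}, so Raynaud's theorem produces a formal extension, which algebraizes as both sides are restrictions of finite-type schemes. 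The extended map lands in $\mathscr{S}^{\ord}$ (not merely in $\mathscr{S}$) because Hecke operators at $p$ preserve the $F$-isocrystal and hence the Newton polygon, while the Hodge polygon depends only on the Shimura datum, so ordinariness is preserved.

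Part (3) is then essentially tautological from the construction: the isomorphism $\pi^*(_\et V_p)\cong \pi_g^*(_\et V_p)$ is supplied by Lemma~\ref{lem:heckelocsyspullback}, and the $(P(\Z_p)\cap gKg^{-1},(\V_p,g\V_p))$-structure on the pair combines the $(P(\Z_p),(\V_p,W_\bullet\V_p))$-structure coming from $\ordsection$ with the additional $L(\Z_p)$-level data parametrized by the cover of Part (1). The main obstacle will be Part (2): the Hecke morphism $\pi_g$ is defined on the generic fiber by pure double-coset manipulations and admits no a priori integral description, so the crossing into characteristic $p$ must go through the ample Griffiths bundle (yielding quasi-affineness) and the formal extension criterion. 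The delicate point is checking that the resulting map indeed factors through $\mathscr{S}^{\ord}$, which is where the isocrystal-invariance of $p$-Hecke operators intervenes.
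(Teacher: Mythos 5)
Your proposal is correct and follows essentially the same route as the paper: both descend to the twisted graded piece $\Gr_W^{\et}(_{\et}\V_p)$ (which, unlike $\Gr_W$ itself, extends over $\mathscr{S}^{\ord}$) to define the integral \'etale cover, both use quasi-affineness of $\mathscr{S}^{\ord}_k$ together with the Raynaud-style extension lemma for part (2), and both dispatch part (3) by the characteristic-zero identification from \Cref{lem:heckelocsyspullback} plus normality. Your extra remarks on algebraization and on the extended map landing in the ordinary locus are reasonable refinements that the paper leaves implicit.
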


\begin{proof}
We first consider the finite \'etale cover $S_P^g$, consisting of, on $S^{\ordsection}_P$,  the space of graded sub-local systems $\oplus_i A_i\subset \Gr_W {_{\et}\V'_p}\otimes \Q_p$, for which $(\Gr_W{_{\et}}\V_p,\oplus_i A_i)$ has a $\big(L(\Z_p),(\Gr_W \V_p, \Gr_W g\V_p)\big)$-structure compatible with the $P(\Z_p)$-structure on $(\V_p,W_\bullet\V_p)$. By Lemma \ref{lem:lift} there is a natural isomorphism $S_P^g\to S^{\ordsection}_{P(\Z_p)K^p\cap gKg^{-1}}$.

Now we wish to use this to define our integral model. Note that while $\Gr_W {_{\et}\V'_p}$ does not extend to $\mathscr{S}^\ord$, the twisted $\Gr^{\et}_W {_{\et}\V'_p}$ does. Moreover, these two local systems on the generic fiber are twists of each other by $\mu$ viewed as an $L$ co-character. Hence, $S^P_g$ is naturally the space of graded sub-local systems $\oplus_i A_i\subset \Gr^{\et}_W {_{\et}\V'_p}\otimes \Q_p$, for which $(\Gr^{\et}_W{_{\et}}\V_p,\oplus_i A_i)$ has a $\big(L(\Z_p),(\Gr^{\et}_W \V_p, \Gr^{\et}_W g\V_p)\big)$-structure compatible with the $P(\Z_p)$-structure on $(\V_p,W_\bullet\V_p)$. This latter definition extends naturally to give our integral structure. This prove (i).

For (ii), the statement follows from \Cref{lemma:quasiaffine} and \Cref{lem:quasiaffineextension}.

Finally, for (iii), the statement is clear on the characteristic $0$ fiber, and extends to mixed characteristic by normality.
\end{proof}

In the following, we denote the extensions of the local systems $(\Gr^{\et}_W {{_{et}\V_p}}|_{S_P^\ordsection})$ to $\mathscr{S}^\ordsection_P$ by $M$.

\begin{theorem}\label{thm:integral lift}
Let $Y/k$ be a variety mod p with a morphism $\phi:Y\to \mathscr{S}^\ord$, $\phi^\ordsection:Y\to \mathscr{S}^\ordsection_P$ its canonical lift.  Let $g\in P(\Q_p)$ be a semi-positive element. A lift $\tilde{\phi^\ordsection}:Y\to \mathscr{S}^\ordsection_{P(\Z_p)K^p\cap gKg^{-1}}$ is equivalent to a graded lattice $M'\subset M_Y\otimes \Q_p$, for which $(M_Y,M')$ has a $(\Gr^{\et}_W{\V_p}, \Gr_W^{\et} g\V_p)$-structure which is compatible with the $L$-structure. Given such a lift, the composition $\psi:=\pi_g\circ\tilde{\phi^\ordsection}:C\to\cS^{\ord}$ satisfies $\psi^*\Gr_W^{\et}{_{\et}\V'_{p}}\cong M'$.
\end{theorem}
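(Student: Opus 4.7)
The plan is to derive this theorem as an essentially immediate consequence of the constructions in Lemma \ref{lem:intstructurecover}, which already provide the integral extension of the relevant cover and its moduli-theoretic interpretation. The characteristic-$0$ version is Lemma \ref{lem:lift}(ii); the content of the present theorem is that the same moduli description lifts integrally, provided one replaces the naive graded $\Gr_W$ by the cyclotomically twisted $\Gr^{\et}_W$ (which, unlike the former, extends to $\mathscr{S}^{\ord}$).

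For the forward direction, given $\tilde{\phi^\ordsection}$, I will pull back the universal pair $(\pi^*{_{\et}V_p}, \pi_g^*{_{\et}V_p})$ of Lemma \ref{lem:intstructurecover}(iii), which carries a natural $(P(\Z_p)\cap gKg^{-1},(\V_p, g\V_p))$-structure. Applying $\Gr^{\et}_W$ and using that the unipotent radical $U$ acts trivially on $\Gr_W$ (so a $P$-structure descends to an $L$-structure on the graded), I obtain a graded sub-lattice $M'\subset M_Y\otimes\Q_p$ with the claimed $(L(\Z_p),(\Gr^{\et}_W\V_p, \Gr^{\et}_W g\V_p))$-structure, compatible with the $L$-structure on $M_Y$.

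For the reverse direction, I will appeal directly to the construction in Lemma \ref{lem:intstructurecover}(i), which describes $\mathscr{S}^\ordsection_{P(\Z_p)K^p\cap gKg^{-1}}$ explicitly as the moduli of graded sub-lattices $\bigoplus_i A_i\subset\Gr^{\et}_W{_{\et}\V'_p}\otimes\Q_p$ equipped with exactly the stated $L(\Z_p)$-structure. Any such $M'$ on $Y$ then defines a canonical lift $\tilde{\phi^\ordsection}$ by the universal property, and the two constructions are manifestly mutually inverse. Finally, for the last assertion, the universal sub-lattice on $\mathscr{S}^\ordsection_{P(\Z_p)K^p\cap gKg^{-1}}$ is tautologically identified with $\Gr^{\et}_W\pi_g^*{_{\et}\V'_p}$, the integral extension of the Hecke-twisted lattice. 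Since $\Gr^{\et}_W$ commutes with pullback and $\psi = \pi_g\circ\tilde{\phi^\ordsection}$, one computes $\psi^*\Gr^{\et}_W{_{\et}\V'_p} = \tilde{\phi^\ordsection,*}\bigl(\Gr^{\et}_W\pi_g^*{_{\et}\V'_p}\bigr) \cong M'$.

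The only genuine issue, namely the compatibility of the moduli interpretation on the integral and the generic fibers, is precisely what Lemma \ref{lem:intstructurecover}(i) already resolves by replacing $\Gr_W$ by $\Gr^{\et}_W$; since $\mu$ factors through the Levi $L$, the twist does not affect the $L$-structure. So beyond assembling the earlier lemmas I expect no additional obstruction.
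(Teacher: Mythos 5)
Your proposal is correct and follows essentially the same route as the paper, whose proof is the single sentence that the result "follows directly from the construction of $\mathscr{S}^\ordsection_P$ given in Lemma \ref{lem:intstructurecover}." You have simply made the paper's implicit argument explicit: the moduli description in Lemma \ref{lem:intstructurecover}(i) gives the bijection between lifts and graded lattices, and Lemma \ref{lem:intstructurecover}(iii) together with the compatibility of $\Gr^{\et}_W$ with pullback gives the identification $\psi^*\Gr_W^{\et}{_{\et}\V'_{p}}\cong M'$.
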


\begin{proof}
This follow directly from the construction of $\mathscr{S}^\ordsection_P$ given in Lemma \ref{lem:intstructurecover}.
\end{proof}

\section{Main Argument}

In this section we finally prove \Cref{thm:mainshimuravarieties}:

\begin{theorem}\label{thm:Heckeflattness}
    Let $k$ be  a finite field of characteristic not dividing $N$. Let  $C$ be a smooth irreducible curve over either $k$ or $\bar{k}$. 
    There exists a constant $N$ such that the following holds: 
    Given a generically ordinary map $f: C\rightarrow \integralShimK$, there is a map $f_1: C\rightarrow \integralShimK$ in the $p$-Hecke orbit of $f$ of bounded Griffiths degree. In particular, if $C$ is defined over $k$, there are only finitely many maps from $C$ to $\integralShimK$ up to $p$-Hecke orbit.

    Moreover, if we assume that $C/\ol{k}$ and the map $f$ has maximal monodromy, then the set of $p$-Hecke orbits is finite.
\end{theorem}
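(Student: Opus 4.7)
The plan is to execute the two steps of the introduction's proof outline using the machinery of Sections 2--6. First, I would bound the isomorphism class of $f^*\cris V'$ as an $F$-isocrystal with $G$-structure via Theorem \ref{thm:Fisocrystalsfinitestuff}: part (4) handles the $G$-structure, while parts (2) (for $C/k$, with the Weil conjectures bounding Frobenius eigenvalues at a fixed $\F_q$-point) and (3) (for $C/\ol{k}$, with $\Q_\ell$-niceness coming from the companion $f^*{_{\et}\V_\ell}$ and the slope bound coming from generic ordinariness together with the fixed Hodge cocharacter $\mu$) bound the underlying isocrystal. Conditions \ref{cond:integralmodel} supply semisimplicity, overconvergence, and tameness. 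This yields a finite set of pairs $(V,\Lambda)$, where $V$ ranges over the possible isomorphism classes of $F$-isocrystals with $G$-structure and $\Lambda\subset V$ is a chosen $F$-crystal lattice with compatible $G$-structure.

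Next, fix such a pair $(V,\Lambda)$ and let $f:C\to\mathscr{S}$ be generically ordinary with $f^*\cris V'\cong V$. Over the nonempty, Zariski-open generic ordinary locus $C^{\ord}\subset C$, I would use the canonical section $\ordsection$ to lift $f|_{C^{\ord}}$ to $\phi^{\ordsection}:C^{\ord}\to\mathscr{S}^{\ordsection}_P$. The graded pieces $\bar\Lambda$ of $\Lambda$ under the slope filtration define graded sublattices of $f^*\Gr^{\et}_W{_{\et}\V'_p}\otimes\Q_p$ on $C^{\ord}$. After replacing $\Lambda$ by $\mu(p^n)\Lambda$ for $n$ large enough to guarantee semi-positivity (Lemma \ref{lem:semipositivityproperty}(ii)), which does not change the $p$-Hecke orbit, Theorem \ref{thm:integral lift} produces a lift of $\phi^{\ordsection}$ to $\mathscr{S}^{\ordsection}_{P(\Z_p)K^p\cap gKg^{-1}}$ for a suitable semi-positive $g\in P(\Q_p)$; composition with $\pi_g$ then gives $f_1^{\ord}:C^{\ord}\to\mathscr{S}^{\ord}$, $p$-Hecke equivalent to $f|_{C^{\ord}}$, whose pullback of $\Gr^{\et}_W{_{\et}\V'_p}$ equals $\bar\Lambda$.

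To globalize, I would extend $f_1^{\ord}$ to a morphism $f_1:C\to\mathscr{S}$ by viewing $(f,f_1^{\ord})$ as a section of the first projection $\mathscr{T}_g\to\mathscr{S}$ over $C^{\ord}$ and applying the valuative criterion at the finitely many non-ordinary points of $C$; the Zariski closure $\mathscr{T}_g$ is proper over $\mathscr{S}$, being the closure of a finite characteristic-zero Hecke correspondence. Then $(f,f_1)$ factors through $\mathscr{T}_g$ by construction, so $f_1$ lies in the $p$-Hecke orbit of $f$. By Lemma \ref{lem: ordinary crystals agreeing at a point}, $f_1^*\cris V'$ agrees with $\Lambda$ over $C^{\ord}$ (both are ordinary sub-$F$-crystals of $V|_{C^{\ord}}$ with matching graded pieces), and this isomorphism extends uniquely to $C$ by Lemma \ref{lem:crystaluniquelyextendstobdrypoints}. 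Proposition \ref{prop: Griffiths Degree determined by F-crystal} then bounds the Griffiths degree of $f_1$ by that of $\Lambda$; taking $M$ to be the maximum over the finite list of $(V,\Lambda)$ yields the uniform bound, and ampleness of the Griffiths bundle on $\mathscr{S}^{\bb}$ gives the finite-type conclusion and finiteness over $k$.

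For the maximal monodromy statement over $\ol{k}$, I would mimic the end of the proof of Theorem \ref{thm:mainavcharp}: any positive-dimensional deformation $\tilde f:C\times X\to\mathscr{S}$ of $f$ forces the $\ell$-adic monodromy of $X$ to commute with the maximal monodromy of $f$ and hence be trivial, so $\tilde f$ is constant along $X$; descent to a finite subfield $k'/k$ and the finiteness already shown then conclude. The principal obstacle I anticipate is the globalization in Step 2: ensuring that the graded lattice chosen at the Levi level over $C^{\ord}$ assembles coherently across the non-ordinary locus into an $F$-crystal yielding a well-defined Hecke-equivalent map to $\mathscr{S}$. This is precisely what the integral machinery of Section 6 (the canonical section $\ordsection$, the integral Hecke morphism of Proposition \ref{prop: canonical branch}, Theorem \ref{thm:integral lift}, and the uniqueness of $F$-crystal extensions) is designed to handle.
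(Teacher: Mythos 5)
Your proof follows the same overall strategy as the paper's: bounding the $F$-isocrystal with $G$-structure via \Cref{thm:Fisocrystalsfinitestuff}, using the canonical section $\lambda$ and \Cref{thm:integral lift} over the ordinary locus to realize a fixed $F$-crystal as the pullback under a Hecke translate, \Cref{lem: ordinary crystals agreeing at a point} to control the full crystal from its graded pieces, \Cref{prop: Griffiths Degree determined by F-crystal} for the degree bound, and the same deformation argument for the maximal-monodromy clause. Two points deserve attention.

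First, where you achieve semi-positivity by replacing $\Lambda$ with $\mu(p^n)\Lambda$, the paper instead replaces the crystal $\D$ by its Frobenius pullback $F^{m*}\D$, obtains a map $\psi$ with $\psi^*{\cris\V'}\cong F^{m*}\D$, and then uses the immersiveness of the Kodaira--Spencer map to show $\psi$ factors through $F^m$, so that $F^{-m*}\psi$ gives the bounded-degree representative. Your route bypasses this Frobenius-factoring step, since $\mu(p^n)\in L(\Q_p)\subset G(\Q_p)$ is already a Hecke operator and $\mu(p^n)$ acts on each slope-graded piece by a scalar power of $p$, so the resulting crystal is abstractly isomorphic to the one built from $\Lambda$ and has the same Griffiths degree. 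That is a genuine and welcome simplification, though you should record the verification that the scalar twist preserves the Griffiths degree.

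Second, your globalization step rests on an unjustified claim: you assert that the first projection $\mathscr{T}_g\to\mathscr{S}$ is proper ``being the closure of a finite characteristic-zero Hecke correspondence.'' But $\mathscr{T}_g$ is by definition only a Zariski closure inside $\mathscr{S}\times\mathscr{S}$, and the closure of a finite correspondence over a dense open need not be proper over the ambient (non-proper) scheme; the valuative criterion therefore does not apply without further input. To be fair, the paper also does not explicitly extend the constructed map from $C^{\circ}$ back to $C$ — it reduces to $C^{\circ}=C$ and observes that this suffices for the finiteness-of-orbits conclusion (the $p$-Hecke orbit of $f:C\to\mathscr{S}$ is determined by $f|_{C^\circ}$ since $\mathscr{T}_g$ is closed), but exhibiting $f_1$ globally on $C$ with bounded Griffiths degree is left implicit. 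So your instinct to address globalization is correct, but the specific mechanism you propose needs an actual proof of properness, or a replacement argument.
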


\begin{proof}

We first claim that the set of $F$-isocrystals with $G$-structure $f^*{{\cris}V}$ is finite. The claim follows from  
\Cref{thm:Fisocrystalsfinitestuff} (2)+(4)  (resp. (3)+(4)) once we establish the assumptions of (2) (resp. (3)). The semi-simplicity, tameness, and overconvergence follow from those same results for $\cris V$ established in \cite[Thm 1.1(3)+Lem 3.1]{patrikis2025}. For (2), note that the set of slopes of $f^*{{\cris}V}$ are contained in the set of slopes of ${\cris}V$. Moreover, $f^*{{\cris}V}$ are all $\Q_\ell$-nice since $\cris V$ is $\Q_\ell$-nice by \cite[Thm 1.1(3)]{patrikis2025}.
For (3), fix $x\in C(k')$ and note that $f(x)\in \cS(k')$, which is a finite set, and thus there are finitely many possible Frobenius eigenvalues.

Next, by the finiteness above, there exists a dense open curve $C^\circ\subset C$ where each such generically ordinary $F$-isocrystal is ordinary.  It is then sufficient to prove finiteness of $p$-Hecke orbits of maps $f:C^{\circ}\to S^{\ord}$ inducing a fixed $F$-isocrystal with $G$-structure $f^*{\cris V}$. We adopt this perspective, and thereby reduce to $C^{\circ}=C$. Let $c\in C$ be a fixed point. 

Now we may fix an $F$-crystal $\D\subset f^*{\cris V'}$ such that $\Gr_W^{\et} \D$ has an $\big(L(\Z_p),\Gr^{\et}_W\V\big)$-structure compatible with the $L(\Q_p)$-structure on $\Gr_W^{\et}f^*{\cris V'}\cong\Gr_W^{\et}f^*{\cris V}$, with bounded Griffiths degree.

\begin{lemma}
For large enough $m$, there is a semi-positive element $g\in L(\Q_p)$ such that  $(\Gr_W^{\et}f^*{\cris\V'},\Gr_W^{\et}F^{m*}\D)_c$ has an  $\big(L(\Z_p)\cap g^{-1}L(\Z_p)g,(\Gr_W\V,\Gr_Wg\V))$-structure compatible with the ambient $L(\Q_p)$ structure of $(\Gr_W^\et f^*{\cris V'})_c$.
\end{lemma}

\begin{proof}
For any trivialization, there is some element $g\in L(\Q_p)$ such that $(\Gr^{\et}_Wf^*{\cris\V'},\Gr^{\et}_W\D)_c$ acquires a compatible  $\big(L(\Z_p)\cap g^{-1}L(\Z_p)g,(\Gr_W\V,\Gr_Wg\V)\big)$ structure. Thus, $(\Gr^{\et}_W f^*{\cris\V'},\Gr^{\et}_W F^{m*}D)_c$ acquires a $\big(L(\Z_p)\cap g^{-1}L(\Z_p)g,(\Gr_W\V,\Gr_W\mu(p^n)g\V)\big)$ structure. For large enough $n$ the element $\mu(p^n)g$ is semi-positive (by \cref{lem:semipositivityproperty}), completing the proof.
\end{proof}

We let $m\in \N,g\in G(\Q_p)$ denote the corresponding elements in the lemma. Note that we have a map $C\to \mathscr{S}_P^\ordsection$ by composing with $\ordsection:\mathscr{S}\to \mathscr{S}_P$. Now by \Cref{thm:integral lift} we obtain a lift $C\to \cS^{\ordsection}_{P(\Z_p)K^p\cap gK_pg^{-1}}$ corresponding to $\Gr_W^\et F^{m*}\D$, and composing with $\pi_g$ we obtain a morphism $\psi:C\to S$ such that ${\psi}^{*}{\cris\V'}\cong F^{m*}\D$ by \Cref{lem:heckelocsyspullback,lem: ordinary crystals agreeing at a point}. It remains to prove that $\psi$ factors through the $m$'th power of Frobenius. Hence it suffices to show that if the pull-back crystal under $\psi$ is a Frobenius pull-back, then the morphism factors through Frobenius. Now if the $F$-crystal is a Frobenius pullback, the corresponding Kodaira-Spencer map on the flat bundle mod $p$ is trivial.  Indeed, the underlying flat vector bundle is a Frobenius pullback $F^*M$ equipped with its canonical connection for which $1\otimes M$ is flat, and as the Hodge filtration is also pulled back under Frobenius it is flat.  By the immersiveness of the Kodaira-Spencer map (conditions \ref{cond:integralmodel}), it follows that $\psi$ has zero derivative and therefore factors through Frobenius. Therefore,  ${F^{-m}}^*\psi$ is in the same $p$-Hecke orbit as $f$ and has bounded Griffiths degree, as desired. The finiteness of such maps over $k$ now follows from \Cref{prop: Griffiths Degree determined by F-crystal} and the ampleness of the Griffiths bundle assumed in \Cref{cond:integralmodel}.

Finally, we prove the last part of the theorem. Suppose that $C$ is defined over $\ol{k}$ and $f$ has maximal monodromy. By the above, it is sufficient to prove that if $g:C\times X\to S$, with $X$ a smooth and connected variety over $\ol{k}$,  such that $g_{C\times x_0}=f$, then all fibers $g_{C\times x}$ are constant\footnote{This is only because we are in the generically ordinary setting, in general the conclusion would only be that they lie in the same $p$-Hecke orbit}.

To see this, note that image of $\pi^{\et}_1(X)$ in $\ell$-adic monodromy commutes with that of $\pi_1^{\et}(C)$, and is therefore trivial. Hence $g^*{_{\et}V_\ell}$ is pulled back from $C$, and thus the $F$-isocrystal $g^* {\cris V\mid_{X\times c}}\otimes \ol{\Q}_p$ is constant - as an $\ol{\Q_p}$-$F$-isocrystal with $G$-structure - for any $c\in C(\ol k)$. Weil-restricting from a finite extension $L/\Q_p$, we see that $g^* {\cris V\mid_{X\times c}}$ is a direct-summand of a constant $F$-isocrystal, and is therefore constant itself.

Since our $F$-isocrystal is ordinary,  the $F$-crystal is also constant by \Cref{lem: ordinary crystals agreeing at a point}. It follows that the Griffiths bundle is trivial, hence since the Griffiths bundle is also ample on $\cS^{\bb}$ by conditions \ref{cond:integralmodel}, the map $g\mid_{X\times c}$ is constant. Hence $g$ factors through $C$ and the claim follows.

\end{proof}

\bibliographystyle{abbrv} \bibliography{biblio}

\end{document}